\providecommand{\tabularnewline}{\\}
\numberwithin{equation}{section}
\numberwithin{figure}{section}
\theoremstyle{plain}
\newtheorem{thm}{\protect\theoremname}
  \theoremstyle{definition}
  \newtheorem{defn}[thm]{\protect\definitionname}
  \theoremstyle{plain}
  \newtheorem{prop}[thm]{\protect\propositionname}
  \theoremstyle{plain}
  \newtheorem{cor}[thm]{\protect\corollaryname}
  \theoremstyle{remark}
  \newtheorem{rem}[thm]{\protect\remarkname}
  \theoremstyle{plain}
  \newtheorem{lem}[thm]{\protect\lemmaname}
  \providecommand{\corollaryname}{Corollary}
  \providecommand{\definitionname}{Definition}
  \providecommand{\lemmaname}{Lemma}
  \providecommand{\propositionname}{Proposition}
  \providecommand{\remarkname}{Remark}
\providecommand{\theoremname}{Theorem}
\begin{document}
\global\long\def\G{\mathcal{G}}
 \global\long\def\F{\mathcal{F}}
 \global\long\def\H{\mathcal{H}}
 \global\long\def\L{\mathcal{L}}
\global\long\def\U{\mathcal{U}}
\global\long\def\W{\mathcal{W}}
 \global\long\def\P{\mathcal{P}}
 \global\long\def\B{\mathcal{B}}
 \global\long\def\A{\mathcal{A}}
\global\long\def\D{\mathcal{D}}
\global\long\def\O{\mathcal{O}}
 \global\long\def\N{\mathcal{N}}
 \global\long\def\X{\mathcal{X}}
 \global\long\def\lm{\mathrm{lim}}
 \global\long\def\then{\Longrightarrow}

\global\long\def\V{\mathcal{V}}
\global\long\def\C{\mathcal{C}}
\global\long\def\adh{\mathrm{\mathrm{adh}}}
\global\long\def\Seq{\mathrm{Seq\,}}
\global\long\def\intr{\mathrm{int}}
\global\long\def\cl{\mathrm{cl}}
\global\long\def\inh{\mathrm{inh}}
\global\long\def\diam{\mathrm{diam}}
\global\long\def\card{\mathrm{card\,}}
\global\long\def\S{\operatorname{S}}
\global\long\def\T{\operatorname{T}}
\global\long\def\I{\operatorname{I}}
\global\long\def\BaseD{\operatorname{B}_{\mathbb{D}}}

\global\long\def\fix{\mathrm{fix\,}}
\global\long\def\Epi{\mathrm{Epi}}

\title[convergence and symmetrizability ]{The convergence-theoretic approach to weakly first countable spaces
and symmetrizable spaces}

\author{Fadoua Chigr and Frédéric Mynard}

\address{New Jersey City University, 2039 J. F. Kennedy Blvd, Jersey City,
NJ 07305}

\email{fmynard@njcu.edu}

\email{fadoua.b@gmail.com}

\thanks{This work was supported by U.S. Education Department Hispanic-Serving
Institutions\textemdash Science, Technology, Engineering \& Math (HSI-STEM)
program grant \# P031C160155, and by the Separately Budgeted Research
program of NJCU for the academic year 2017-2018.}
\maketitle

\section{Introduction}

The convergence-theoretic approach to general topology as developed
by S. Dolecki and his collaborators has reached a level of maturity
allowing for classical general topology to be revisited systematically
in a textbook-like format \cite{DM.book}. After re-interpreting topological
notions in terms of functorial inequalities in the category of convergence
spaces and continuous maps, many classical results are unified and
refined with proofs that are reduced to an algebraic calculus of inequalities.

This article fits in this context and serves as yet another illustration
of the power of the method. More specifically, we spell out convergence-theoretic
characterizations of the notions of weak base, weakly first-countable
space, semi-metrizable space, and symmetrizable spaces. We should
note that these easy characterizations are probably part of the folklore
of convergence space theory, but we couldn't find them explicitly
stated in the literature. With the help of the already established
similar characterizations of the notions of Fréchet-Ursyohn, sequential
\cite{KR.decompositionseries}\cite{quest2}, and accessibility spaces
\cite{dolecki1998topologically}, we give a simple algebraic proof
of a classical result regarding when a symmetrizable space is semi-metrizable,
a weakly first-countable space is first countable, and a sequential
space is Fréchet, that clarifies the situation for non-Hausdorff spaces. 

On the other hand, these reformulations bring the notions of symmetrizability
and weak first-countability into the fold of the general program studying
stability of of topological properties under product with the tools
of \emph{coreflectively modified duality} \cite{DM.products,mynard,mynard.strong,Mynard.survey,coreflrevisited}.
This way, we obtain simple algebraic proofs of various results (of
Y. Tanaka) on the stability under product of symmetrizability and
weak first-countability, and we obtain the same way a (as far as we
know, entirely new) characterization of spaces whose product with
every metrizable topology is weakly first-countable, respectively
symmetrizable. 

We use notations and terminology consistent with the recent book \cite{DM.book},
to which we refer the reader for undefined notions, and for a comprehensive
treatment of convergence spaces. We gather notation, conventions,
and essential notions of convergence theory we refer to in Appendi
at the end of this article, which the reader unfamiliar with convergence
spaces should probably read first.

\section{Weak base and weakly first-countable spaces}

Recall the classical topological notion of weak base.
\begin{defn}
Let $X$ be a topological space. For each $x\in X$, let $\B_{x}$
be a filter-base on $X$ such that $x\in\bigcap\B_{x}$. The family
\[
\B=\bigcup_{x\in X}\B_{x}
\]
 is called a \emph{weak base} for $X$ if $U\subset X$ is open if
and only if for every $x\in U$ there is $V\in\B_{x}$ with $x\in V\subset U$.

A space $X$ is \emph{weakly first-countable }if it has a weak base
such that $\B_{x}$ is countable, for each $x\in X$. 
\end{defn}
The notion of weakly first-countable space was introduced by Arhangel'skii
in \cite{arh.mapppings}\emph{ }and renamed \emph{g-first countable
}by Siwiec in \cite{MR0350706}\emph{. }In view of the following proposition,
we will call it \emph{of countable pretopological character}. Let
$\S_{0}$ and $\T$ denote the reflectors from the category of convergence
spaces onto that of pretopological and topological spaces respectively.
See the Appendix for definitions of notions of convergence theory.

\begin{prop}
\label{prop:weakbase} 
\begin{enumerate}
\item If $\xi$ is a pretopology and $\B_{x}$ is a filter-base of $\V_{\xi}(x)$
for every $x\in X$, then $\B=\bigcup_{x\in X}\B_{x}$ is a weak base
of $\T\xi$.
\item If $\B=\bigcup_{x\in X}\B_{x}$ is a weak base of a topology $\tau$
on $X$, then a pretopology $\xi$ defined by $\V_{\xi}(x):=\B_{x}^{\uparrow}$
for every $x\in X$ fulfills $\tau=\T\xi$.
\end{enumerate}
\end{prop}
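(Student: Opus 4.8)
The whole proposition turns on a single structural fact about the topological reflector, which I would isolate first: for a pretopology $\xi$, the open sets of $\T\xi$ are exactly the sets that are $\xi$-vicinities of each of their points,
\[
U \text{ is } \T\xi\text{-open} \iff U \in \V_{\xi}(x) \text{ for every } x \in U .
\]
Indeed, the open sets of $\T\xi$ are by definition the $\xi$-open sets, and since for a pretopology $\V_{\xi}(x)$ is the coarsest filter converging to $x$, a set $U$ is $\xi$-open precisely when $U \in \V_{\xi}(x)$ whenever $x \in U$. I would record this as the initial step, referring to the Appendix for the description of $\T$. I would also note, using the standing convention that every point converges to itself (so that $x \in \bigcap \V_{\xi}(x)$), that any filter-base $\B_{x}$ of $\V_{\xi}(x)$ automatically satisfies $x \in \bigcap \B_{x}$ and is thus admissible in the sense of the weak-base definition.

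For (1) I would simply unwind the weak-base condition against this characterization. Fix $U \subseteq X$. If $U$ is $\T\xi$-open then $U \in \V_{\xi}(x)$ for each $x \in U$; as $\B_{x}$ generates $\V_{\xi}(x)$, some $V \in \B_{x}$ satisfies $V \subseteq U$, and $x \in V$ because $x \in \bigcap \B_{x}$, giving $x \in V \subseteq U$. Conversely, if for every $x \in U$ there is $V \in \B_{x}$ with $x \in V \subseteq U$, then $V \in \V_{\xi}(x)$ and upward closure of the filter forces $U \in \V_{\xi}(x)$; holding for all $x \in U$, this makes $U$ be $\T\xi$-open. These two implications are exactly the defining equivalence of a weak base, so $\B = \bigcup_{x} \B_{x}$ is a weak base of $\T\xi$.

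For (2) I would first check that $\V_{\xi}(x) := \B_{x}^{\uparrow}$ does define a pretopology: each $\B_{x}^{\uparrow}$ is a filter, and the weak-base hypothesis $x \in \bigcap \B_{x}$ guarantees $x \in \bigcap \V_{\xi}(x)$, so every point converges to itself and the assignment is a legitimate vicinity system. Then I would compare open sets. By the key fact, $U$ is $\T\xi$-open iff $U \in \B_{x}^{\uparrow}$ for every $x \in U$, that is, iff for each $x \in U$ some $V \in \B_{x}$ has $V \subseteq U$ (equivalently $x \in V \subseteq U$). This is verbatim the weak-base characterization of $\tau$-openness. Hence $\T\xi$ and $\tau$ have the same open sets, and $\tau = \T\xi$.

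The content is genuinely light once the reflector is understood: the two parts are the same computation read in opposite directions. The only step I expect to require care is the correct invocation of the description of $\T\xi$-open sets for a pretopology, which is the real crux; the admissibility condition $x \in \bigcap \B_{x}$ and the verification that $x \mapsto \B_{x}^{\uparrow}$ obeys the pretopology axioms in (2) are then immediate from the standing conventions.
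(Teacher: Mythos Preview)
Your proof is correct and follows essentially the same approach as the paper's: both hinge on the characterization that, for a pretopology $\xi$, a set $U$ is $\T\xi$-open iff $U\in\V_{\xi}(x)$ for every $x\in U$, and then both parts amount to matching this against the weak-base condition. The paper phrases part (1) via $\xi\geq\T\xi$ and the inherence operator $\inh_{\xi}$, while you invoke the open-set characterization directly, but this is only a cosmetic difference.
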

\begin{proof}
(1): Assume $\xi=\operatorname{S}_{0}\xi$ with $\T\xi=\tau$ and
$\B_{x}$ is a filter-base of $\V_{\xi}(x)$ for each $x\in X$. Let
$U\in\O_{\tau}$ with $x\in U$. Because $\xi\geq\tau$, $\V_{\xi}(x)\geq\N_{\tau}(x)$
so that there is $V\in\B_{x}$ with $x\in V\subset U$. If conversely,
$U$ is such that for every $x\in U$ there is $V\in\V_{\xi}(x)$
with $x\in V\subset U$ then $U\subset\inh_{\xi}U$ and thus $U$
is $\xi$-open, hence $\tau$-open.

(2): if $\B=\bigcup_{x\in X}\B_{x}$ is a weak base for $\tau$ and
$\xi$ is a pretopology defined by $\V_{\xi}(x):=\B_{x}^{\uparrow}$
, then $\T\xi=\tau$. Indeed, $U$ is $\xi$-open if and only if $U\in\bigcap_{x\in U}\V_{\xi}(x)$,
which is here the weak base condition.
\end{proof}
In other words,
\begin{prop}
A weak base of a topology $\tau$ is a (convergence) base of a pretopology
$\xi$ for which $\T\xi=\tau.$
\end{prop}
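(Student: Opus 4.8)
The plan is to read this proposition as a direct repackaging of Proposition \ref{prop:weakbase}, whose two parts already supply both directions of the asserted correspondence. Recall that a \emph{(convergence) base} of a pretopology $\xi$ is a family $\bigcup_{x\in X}\B_x$ in which each $\B_x$ is a filter-base generating the vicinity filter, that is, $\B_x^\uparrow=\V_\xi(x)$. With this understood, the literal assertion, that every weak base of $\tau$ is such a base of some pretopology $\xi$ with $\T\xi=\tau$, is exactly part (2); part (1) supplies the converse, so that the two classes of families coincide.

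For the (literal) forward direction I would start from a weak base $\B=\bigcup_{x\in X}\B_x$ of $\tau$ and set $\V_\xi(x):=\B_x^\uparrow$. The definition of a weak base requires $x\in\bigcap\B_x$, hence every member of $\B_x^\uparrow$ contains $x$; this is precisely the condition $\{x\}^\uparrow\geq\V_\xi(x)$ that makes $x\mapsto\V_\xi(x)$ the vicinity map of a genuine pretopology $\xi$. By construction $\B_x$ is a base of $\V_\xi(x)$, so $\B$ is a convergence base of $\xi$, and part (2) of Proposition \ref{prop:weakbase} yields $\T\xi=\tau$.

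For the converse I would take any convergence base $\B=\bigcup_{x\in X}\B_x$ of a pretopology $\xi$ (so that $\xi=\S_0\xi$) with $\T\xi=\tau$. Then each $\B_x$ is a filter-base of $\V_\xi(x)$, and part (1) of Proposition \ref{prop:weakbase} gives at once that $\B$ is a weak base of $\T\xi=\tau$.

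I do not anticipate any genuine obstacle, since all the content is already carried by Proposition \ref{prop:weakbase}. The single point deserving a line of justification is the well-definedness of $\xi$ in the forward direction, namely that prescribing the vicinity filters $\B_x^\uparrow$ does define a pretopology; and this is guaranteed precisely by the clause $x\in\bigcap\B_x$ in the definition of a weak base, which forces $\V_\xi(x)\leq\{x\}^\uparrow$.
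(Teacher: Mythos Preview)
Your proposal is correct and matches the paper's intent: the paper gives no separate proof for this proposition at all, introducing it with the phrase ``In other words,'' immediately after Proposition~\ref{prop:weakbase}, i.e., treating it as nothing more than a restatement. Your unpacking of the two directions via parts (1) and (2) of Proposition~\ref{prop:weakbase}, together with the observation that $x\in\bigcap\B_x$ is exactly what is needed for $\V_\xi(x):=\B_x^{\uparrow}$ to define a pretopology, is precisely the intended reading.
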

Therefore, we alternatively call a weak base a \emph{pretopological
base }or $\S_{0}$-\emph{base}, and accordingly, we say that a topology
is \emph{of countable pretopological character }or \emph{of countable
}$\S_{0}$-\emph{character }rather than use the terms \emph{weakly
first-countable }or \emph{g-first-countable }introduced before. We
call a topology with a countable local base (and by extension, convergences
based in countably-based filters) \emph{of countable character }rather
than \emph{first-countable.}

In view of Proposition \ref{prop:weakbase}, 
\begin{cor}
\label{cor:gfirst} A topology $\tau$ has countable $\S_{0}$-character
if and only if there exists a pretopology $\xi$ of countable character
with $\T\xi=\tau$.
\end{cor}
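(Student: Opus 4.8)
The plan is to prove Corollary \ref{cor:gfirst} as a direct consequence of Proposition \ref{prop:weakbase}, by unwinding the definition of countable $\S_0$-character (= weak first-countability) and matching it against the two directions of that proposition. The statement asserts an equivalence, so I would prove the two implications separately.

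For the forward direction, suppose $\tau$ has countable $\S_0$-character, meaning it admits a weak base $\B = \bigcup_{x\in X}\B_x$ with each $\B_x$ countable. I would invoke part (2) of Proposition \ref{prop:weakbase} to produce the pretopology $\xi$ defined by $\V_\xi(x) := \B_x^{\uparrow}$, which satisfies $\T\xi = \tau$. The only extra observation needed is that this $\xi$ has countable character: since $\B_x$ is a countable filter-base generating $\V_\xi(x)$, the neighborhood filter at each point is countably based, which is precisely what it means for the pretopology to be of countable character. So $\xi$ is the required pretopology.

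For the converse, suppose there is a pretopology $\xi$ of countable character with $\T\xi = \tau$. Countable character of $\xi$ means each $\V_\xi(x)$ has a countable filter-base $\B_x$; note $x\in\bigcap\B_x$ since $\xi$ is a pretopology (every neighborhood of $x$ contains $x$). I would then apply part (1) of Proposition \ref{prop:weakbase} — taking care that its hypothesis requires $\xi = \S_0\xi$, which holds automatically because $\xi$ is a pretopology — to conclude that $\B = \bigcup_{x\in X}\B_x$ is a weak base of $\T\xi = \tau$. Since each $\B_x$ is countable, this exhibits $\tau$ as having countable $\S_0$-character.

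The argument is essentially a bookkeeping translation between the topological language of weak bases and the convergence-theoretic language of pretopological neighborhood filters, so there is no serious obstacle. The one point deserving a moment's care is the compatibility of the countability clause with the two parts of Proposition \ref{prop:weakbase}: part (1) starts from filter-\emph{bases} $\B_x$ and part (2) \emph{produces} them, so I must verify that ``$\B_x$ countable for every $x$'' corresponds exactly, under the correspondence $\V_\xi(x)=\B_x^{\uparrow}$, to ``$\xi$ of countable character.'' Once this dictionary entry is checked, the corollary follows immediately from the proposition, and indeed the proof can be stated in a couple of lines.
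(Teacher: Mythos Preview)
Your proposal is correct and matches the paper's approach exactly: the paper states the corollary immediately after the phrase ``In view of Proposition~\ref{prop:weakbase}'' without a separate proof, and your argument is precisely the unpacking of that reference, checking that the countability of the $\B_x$'s corresponds to countable character of the associated pretopology.
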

Let $\I_{1}$ denote the the coreflector on convergence spaces of
countable character. The sequentiality of a topology $\tau$ can be
characterized by the inequality $\tau\geq\T\I_{1}\tau$ (See Appendix
for details). Combining this fact with Corollary \ref{cor:gfirst},
it is clear that each space of countable $\S_{0}$-character is sequential.

Let us call a map $d:X\times X\to\mathbb{R}^{+}$ such that $d(x,x)=0$
for all $x\in X$ a \emph{protometric }on $X$. The balls $B(x,\epsilon)=\{y\in X:d(x,y)<\epsilon\}$
around a point $x$ for a protometric form a filter-base with $x$
in its intersection. Hence a protometric $d$ defines a pretopology
$\tilde{d}$ of countable character given by the vicinity filter
\begin{equation}
\V_{\tilde{d}}(x)=\left\{ B(x,\epsilon):\epsilon>0\right\} ^{\uparrow}=\Big\{ B\Big(x,\frac{1}{n}\Big):n\in\mathbb{N}\Big\}^{\uparrow},\label{eq:dtilde}
\end{equation}
at each $x\in X$. We will say that a convergence $\xi$ is \emph{protometrizable
}if there is a protometric $d$ on $|\xi|$ with $\xi=\tilde{d}$. 
\begin{prop}
\label{prop:protometrizable} A convergence is protometrizable if
and only if it is a pretopology of countable character.
\end{prop}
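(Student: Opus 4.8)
The plan is to prove both implications separately. The easy direction is already essentially supplied by the construction preceding the statement: given a protometric $d$, the formula \eqref{eq:dtilde} exhibits $\tilde{d}$ as a pretopology (each vicinity filter is fixed, i.e. $x\in\bigcap\V_{\tilde{d}}(x)$, and a pretopology is determined by assigning an arbitrary fixed filter of vicinities at each point) and the rightmost equality in \eqref{eq:dtilde} shows that $\V_{\tilde{d}}(x)$ is generated by the countable base $\{B(x,\tfrac1n):n\in\mathbb{N}\}$, so $\tilde{d}$ has countable character. Hence every protometrizable convergence is a pretopology of countable character, and only the converse requires genuine work.

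For the converse, suppose $\xi$ is a pretopology of countable character on a set $X$. The plan is to build a protometric $d$ with $\tilde{d}=\xi$ by hand. For each $x\in X$ fix a countable base $\{W_n(x):n\in\mathbb{N}\}$ of $\V_\xi(x)$; since $\xi$ is a pretopology we have $x\in\bigcap\V_\xi(x)$, so we may assume each $W_n(x)\ni x$, and by replacing $W_n(x)$ with $\bigcap_{k\le n}W_k(x)$ we may assume the base is decreasing: $W_0(x)\supset W_1(x)\supset\cdots$. I would then define $d(x,y)=\inf\{\tfrac1{n+1}:y\in W_n(x)\}$, with the convention $d(x,y)=1$ (or any fixed positive constant) if $y\notin W_0(x)$, and $d(x,x)=0$. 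This $d$ is a protometric: $d\ge 0$ and $d(x,x)=0$ because $x\in W_n(x)$ for all $n$. No symmetry or triangle inequality is needed, which is exactly why a protometric rather than a metric is the right object and why the characterization comes out clean.

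The crux is verifying $\V_{\tilde d}(x)=\V_\xi(x)$ for every $x$. The key computation is to identify the balls: by the decreasing-nesting of the $W_n(x)$ and the definition of $d$, one checks that $B(x,\tfrac1{n+1})=\{y:d(x,y)<\tfrac1{n+1}\}=W_n(x)$ (the point being that $d(x,y)<\tfrac1{n+1}$ forces $y\in W_m(x)$ for some $m\ge n$, hence $y\in W_n(x)$ by nesting, and conversely $y\in W_n(x)$ gives $d(x,y)\le\tfrac1{n+1}$—here a small index shift or a strict-versus-nonstrict adjustment is where care is needed). Granting this, the balls $\{B(x,\epsilon):\epsilon>0\}$ and the base $\{W_n(x):n\in\mathbb{N}\}$ generate the same filter, so $\V_{\tilde d}(x)=W(x)^{\uparrow}=\V_\xi(x)$, and therefore $\tilde d=\xi$.

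I expect the main obstacle to be purely bookkeeping: getting the correspondence between ball radii and base indices to match up exactly, so that the strict inequality defining $B(x,\epsilon)$ recovers precisely $W_n(x)$ rather than an off-by-one neighbor. Choosing the radius values as $\tfrac1{n+1}$ and taking an infimum (rather than, say, assigning $d(x,y)=\tfrac1{n+1}$ for the largest $n$ with $y\in W_n(x)$) is the device that makes the strict-inequality balls coincide with the base sets on the nose; once the indexing convention is pinned down the verification is routine, and the appeal to Corollary \ref{cor:gfirst} is not even needed since the argument is self-contained at the level of vicinity filters.
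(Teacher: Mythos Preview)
Your proposal is correct and follows essentially the same approach as the paper: fix a decreasing countable base of each vicinity filter and define $d(x,y)$ according to the largest index $n$ with $y\in W_n(x)$ (your infimum formulation and the paper's piecewise definition $d(x,y)=\tfrac1n$ for $y\in V_n(x)\setminus V_{n+1}(x)$ amount to the same thing). You additionally spell out the easy forward direction, which the paper leaves implicit, and you correctly identify the only delicate point as the strict-versus-nonstrict index bookkeeping---which is harmless since an index shift still yields the same generated filter.
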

\begin{proof}
Let $\sigma$ be a pretopology of countable character. For each $x\in|\sigma|$,
let $\{V_{n}(x):n\in\mathbb{N}\}$ be a decreasing filter-base of
$\V_{\sigma}(x)$. Let $d:|\sigma|\times|\sigma|\to[0,\infty)$ be
defined by $d(x,y)=0$ if $y\in\bigcap_{n\in\mathbb{N}}V_{n}(x)$,
$d(x,y)=\frac{1}{n}$ if $y\in V_{n}(x)\setminus V_{n+1}(x)$, and
$d(x,y)=1$ otherwise. By definition, $d$ is a protometric because
$x\in\bigcap_{n\in\mathbb{N}}V_{n}(x)$. Moreover, $d(x,y)\leq\frac{1}{n}$
if and only if $y\in V_{n}(x)$ so that $\V_{\tilde{d}}(x)=\V_{\sigma}(x)$,
that is, $\sigma=\tilde{d}$.
\end{proof}
Hence \emph{every }(pre)topology of countable character is protometrizable.
Moreover, 
\[
x\in\lm_{\tilde{d}}\{y\}^{\uparrow}\iff d(x,y)=0,
\]
so that there is no hope of protometrizing every topology of countable
character with a \emph{symmetric} protometric (\footnote{that is, a protometric $d$ on $X$ satisfying $d(x,y)=d(y,x)$ for
all $x,y\in X$}) for there are topological spaces (say, the Sierpi\'nski space)
with points $x$ and $y$ with $x\in\lim\{y\}^{\uparrow}$ but $y\notin\lim\{x\}^{\uparrow}$.

Nedev called a topological space $(X,\tau)$ \emph{o-metrizable }\cite{MR0367935}
if there is a protometric $d$ on $X$ for which $O\subset X$ is
open if and only if for every $x\in O$, there is $\epsilon>0$ with
$B(x,\epsilon)\subset O$. In our terms, this means that $\tau$ is
o-metrizable if there is a protometric $d$ on $X$ with $\T\tilde{d}=\tau$
(\footnote{For $O\in\O_{\tau}=\O_{\tilde{d}}$ if and only if $O\in\bigcap_{x\in O}\V_{\tilde{d}}(x)$,
that is, if and only if for every $x\in O$, there is $\epsilon_{x}$
with $x\in B(x,\epsilon_{x})\subset O$. }). Therefore, Corollary \ref{cor:gfirst} and Proposition \ref{prop:protometrizable}
combine to the effect that
\begin{prop}
A topology is o-metrizable if and only if it has countable $\S_{0}$-character.
\end{prop}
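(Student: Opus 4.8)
The plan is to unwind both directions of the equivalence to the same intermediate condition --- the existence of a pretopology $\xi$ of countable character with $\T\xi=\tau$ --- and then simply chain together the two characterizations already established. The protometric $d$ appearing in the definition of o-metrizability is exactly the data manipulated by Proposition \ref{prop:protometrizable}, while Corollary \ref{cor:gfirst} reformulates countable $\S_{0}$-character in terms of such a $\xi$. So the entire argument amounts to transporting the relevant existential quantifier across these two results.

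For the forward implication, I would begin with an o-metrizable topology $\tau$, so that by definition there is a protometric $d$ on $X$ with $\T\tilde{d}=\tau$. By Proposition \ref{prop:protometrizable}, the convergence $\tilde{d}$ is a pretopology of countable character; taking $\xi:=\tilde{d}$ therefore exhibits a pretopology of countable character with $\T\xi=\tau$, and Corollary \ref{cor:gfirst} immediately delivers that $\tau$ has countable $\S_{0}$-character.

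Conversely, if $\tau$ has countable $\S_{0}$-character, Corollary \ref{cor:gfirst} furnishes a pretopology $\xi$ of countable character with $\T\xi=\tau$. Proposition \ref{prop:protometrizable} asserts that such a $\xi$ is protometrizable, so there is a protometric $d$ with $\xi=\tilde{d}$, whence $\T\tilde{d}=\T\xi=\tau$ and $\tau$ is o-metrizable by definition.

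The only point deserving any attention --- and it is minor --- is to confirm that the protometric produced by Proposition \ref{prop:protometrizable} is the same kind of object (a map $X\times X\to\mathbb{R}^{+}$ with vanishing diagonal) occurring in Nedev's definition, and that the identification $\O_{\tau}=\O_{\tilde{d}}$ recorded in the earlier footnote genuinely encodes $\T\tilde{d}=\tau$. Because both Nedev's definition and Proposition \ref{prop:protometrizable} are phrased through the very same pretopology $\tilde{d}$ attached to a protometric via the balls $B(x,\epsilon)$, there is no real obstacle here: the proposition is in effect a dictionary entry translating Nedev's o-metrizability into the convergence-theoretic language, and I expect the proof to occupy only a couple of lines.
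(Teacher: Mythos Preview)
Your proof is correct and follows exactly the approach the paper intends: the proposition is stated there as an immediate combination of Corollary \ref{cor:gfirst} and Proposition \ref{prop:protometrizable}, and you have simply spelled out how the two existential quantifiers line up. The only superfluous step is invoking Proposition \ref{prop:protometrizable} in the forward direction, since $\tilde d$ is a pretopology of countable character directly from its definition \eqref{eq:dtilde}; but this is harmless.
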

More generally, we say that a convergence $\xi$ \emph{has countable
$\S_{0}$-character }or is $\S_{0}$-\emph{characterized} if there
is a pretopology $\sigma$ of countable character with 
\[
\sigma\geq\xi\geq\T\sigma.
\]

In the Hausdorff case, there is a unique choice for the pretopology
$\sigma$:
\begin{prop}
\label{prop:HausdorffI1xi} A Hausdorff convergence $\xi$ has countable
$\S_{0}$-character if and only if $\I_{1}\xi$ is a pretopology and
$\xi$ is sequential.

In this case, $\I_{1}\xi$ is the only pretopology $\sigma$ of countable
character with $\sigma\geq\xi\geq\T\sigma$.
\end{prop}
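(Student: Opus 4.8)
The plan is to handle the two implications by opposite means: the sufficiency is pure functorial calculus and needs no separation axiom, while the necessity splits into a one-line inequality (sequentiality) and a single genuinely topological claim in which the Hausdorff hypothesis is indispensable.

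For the sufficiency I would simply put $\sigma:=\I_1\xi$. Since $\I_1\xi$ always has countable character and always satisfies $\I_1\xi\ge\xi$, the only facts needed are that $\I_1\xi$ is a pretopology---the first hypothesis---and that $\xi\ge\T\I_1\xi$, which is precisely the sequentiality of $\xi$. Then $\sigma=\I_1\xi$ fulfils $\sigma\ge\xi\ge\T\sigma$, so $\xi$ has countable $\S_0$-character. No Hausdorff assumption enters.

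For the necessity, let $\sigma$ be \emph{any} pretopology of countable character with $\sigma\ge\xi\ge\T\sigma$. Applying the isotone coreflector $\I_1$ to $\xi\le\sigma$ and using $\I_1\sigma=\sigma$ gives $\I_1\xi\le\sigma$; applying $\T$ then yields $\T\I_1\xi\le\T\sigma\le\xi$, so $\xi$ is sequential. It remains to establish the reverse inequality $\sigma\le\I_1\xi$: this simultaneously shows that $\I_1\xi=\sigma$ is a pretopology and, since it applies to every admissible $\sigma$, gives the asserted uniqueness. Unravelling the definition of $\S_0\I_1\xi$, the inequality $\sigma\le\I_1\xi$ reduces to one claim: every countably based filter $\mathcal G$ with $x\in\lim_\xi\mathcal G$ already satisfies $x\in\lim_\sigma\mathcal G$, i.e. $\mathcal G\ge\V_\sigma(x)$.

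This claim is the main obstacle and the only place where Hausdorffness is used. I would argue by contradiction: if some $\sigma$-vicinity $V\in\V_\sigma(x)$ (necessarily with $x\in V$) fails to lie in $\mathcal G$, pick $x_n\in G_n\setminus V$ along a decreasing base $(G_n)$ of $\mathcal G$, obtaining $x_n\to_\xi x$ with $x_n\ne x$ and $x_n\notin V$. Put $\tau:=\T\sigma=\T\xi$, the equality coming from applying $\T$ to $\sigma\ge\xi\ge\T\sigma$. The key structural fact is that in the \emph{countably based} pretopology $\sigma$ one has $\adh_\sigma A=\{y:\exists(a_k)\text{ in }A,\ a_k\to_\sigma y\}$. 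Hence any $y\in\adh_\sigma\{x_n:n\in\mathbb N\}$ is the $\sigma$-limit, thus the $\xi$-limit, of a sequence drawn from $\{x_n\}$; Hausdorffness of $\xi$ forces such a limit to equal $x$ (when the indices tend to infinity) or some $x_{n_0}$ (when it is eventually constant). As $V$ separates $x$ from $\{x_n\}$ we have $x\notin\adh_\sigma\{x_n\}$, whence $\adh_\sigma\{x_n\}=\{x_n:n\in\mathbb N\}$. So $\{x_n\}$ is $\sigma$-closed, and since $\adh_\sigma$ is additive and $\tau=\T\sigma$ has exactly the $\sigma$-closed sets as its closed sets, $\{x_n\}$ is $\tau$-closed; this contradicts $x_n\to_\tau x$ together with $x\notin\{x_n\}$. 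I expect the verification that $\adh_\sigma$ is computed by sequences and that $\sigma$-closedness upgrades to $\tau$-closedness to be the only points needing care; everything else is the inequality calculus above.
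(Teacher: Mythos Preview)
Your argument is correct, but the necessity direction proceeds differently from the paper. Both proofs share the same skeleton---establish $\sigma\ge\I_1\xi$ by functoriality, then show the reverse inequality by proving that every countably based $\mathcal G$ with $x\in\lim_\xi\mathcal G$ satisfies $x\in\lim_\sigma\mathcal G$---but they diverge at this last step. The paper argues abstractly: for each sequential filter $\mathcal E\ge\mathcal G$ one has $x\in\lim_{\Seq\xi}\mathcal E\subset\lim_{\Seq\T\sigma}\mathcal E=\lim_{\Seq\sigma}\mathcal E$ (the equality being the cited result $\Seq\sigma=\Seq\T\sigma$ for Hausdorff pretopologies), and then uses that $\sigma$ is a pretopology and that the countably based filter $\mathcal G$ is Fr\'echet, so $\mathcal G=\bigwedge_{\mathcal E\ge\mathcal G}\mathcal E$ and $x\in\lim_\sigma\mathcal G$. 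Your proof instead manufactures a concrete contradiction: extract a sequence $(x_n)$ from $\mathcal G$ avoiding a witnessing vicinity $V$, then show $\{x_n\}$ is $\sigma$-closed, hence $\T\sigma$-closed, contradicting $x_n\to_{\T\sigma}x$. In effect you are re-proving by hand the one instance of $\Seq\sigma=\Seq\T\sigma$ that is needed. The paper's route is shorter once the lemma is available and highlights exactly where Hausdorffness enters (it is the hypothesis of that lemma); your route is more self-contained and arguably more transparent for a reader who has not internalised the $\Seq$ machinery. One small point: your dichotomy ``indices tend to infinity'' versus ``eventually constant'' is not exhaustive---a sequence drawn from $\{x_n\}$ can be neither---so phrase it as ``some value is repeated infinitely often'' versus ``each value appears finitely often'' and pass to an appropriate subsequence in each case.
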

\begin{proof}
If $\xi$ has countable $\S_{0}$-character then $\xi$ is in particular
sequential, and there is $\sigma=\S_{0}\sigma=\I_{1}\sigma$ with
$\sigma\geq\xi\geq\T\sigma$. In particular, $\sigma\geq\I_{1}\xi$.
Moreover, to see the reverse inequality, note that if $x\in\lim_{\I_{1}\xi}\F$
then there is $\H\in\mathbb{F}_{1}$ with $\H\leq\F$ and $x\in\lim_{\xi}\H$.
For every $\mathcal{E}\in\mathbb{E}|\xi|$ with $\mathcal{E\geq\H},$
\[
x\in\lm_{\Seq\xi}\mathcal{E}\subset\lm_{\Seq\T\sigma}\mathcal{E}=\lm_{\Seq\sigma}\mathcal{E}
\]
by Proposition \ref{prop:coincideSeq}, so that $x\in\lm_{\sigma}\mathcal{E}$.
As $\sigma=\S_{0}\sigma$, 
\[
x\in\lm_{\sigma}\bigwedge_{\mathcal{E}\in\mathbb{E}|\xi|,\mathcal{E\geq\H}}\mathcal{E},
\]
and $\bigwedge_{\mathcal{E}\in\mathbb{E}|\xi|,\mathcal{E\geq\H}}\mathcal{E}=\H$
because $\H\in\mathbb{F}_{1}$, hence is a Fréchet filter, so that
$x\in\lm_{\sigma}\H\subset\lim_{\I_{1}\xi}\F$. Thus $\sigma=\I_{1}\xi$
is pretopological and is the only $\sigma$ witnessing the definition
of countable $\S_{0}$-character. 

Conversely, if $\I_{1}\xi$ is a pretopology and $\xi$ is sequential,
that is, $\xi\geq\T\I_{1}\xi$, then $\sigma=\I_{1}\xi$ witnesses
the definition of countable $\S_{0}$-character of $\xi$.
\end{proof}
As a result, if $\xi$ is a Fréchet Hausdorff pretopology of countable
$\S_{0}$-character, then $\xi=\S_{0}\I_{1}\xi$ and $\I_{1}\xi$
is pretopological, so that $\xi=\I_{1}\xi$ has countable character.
This is a classical result that we will refine in the Section \ref{sec:Semimet}.

\section{Semi-metrizable spaces and symmetrizable spaces\label{sec:Semimet}}

We call \emph{semi-metric} a symmetric protometric $d$ on $X$ such
that $d(x,y)=0$ if and only if $x=y$. This is also called \emph{premetric}
in \cite{DM.book}. Traditionally, e.g., \cite{Gruenhage84}, a topological
space $(X,\tau)$ is called \emph{semi-metrizable }if there is a semi-metric
$d$ on $X$ for which, for every $x\in X$ and $A\subset X$,
\begin{equation}
x\in\cl_{\tau}A\iff d(x,A):=\inf_{a\in A}d(x,a)=0,\label{eq:semimetricclosure}
\end{equation}
and \emph{symmetrizable }if there is a semi-metric $d$ on $X$ for
which $O$ is $\tau$-open if and only if for every $x\in O$, there
is $\epsilon>0$ with $B(x,\epsilon)\subset O$. This can easily be
reformulated in terms more directly comparable to (\ref{eq:semimetricclosure}):
$\tau$ is symmetrizable if and only if $F\subset X$ is $\tau$-closed
if and only if $d(x,F)>0$ for every $x\in X\setminus F$. Hence a
semi-metrizable space is symmetrizable. Reformulating the definitions
in terms of the induced pretopology, we obtain:
\begin{thm}
\label{thm:semimet} Let $(X,\tau)$ be a topological space. 
\begin{enumerate}
\item $(X,\tau)$ is semi-metrizable if and only if there is a semi-metric
$d$ on $X$ for which $\tau=\tilde{d}$;
\item $(X,\tau)$ is symmetrizable if and only if there is a semi-metric
$d$ on $X$ for which $\tau=\T\tilde{d}$.
\end{enumerate}
\end{thm}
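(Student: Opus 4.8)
The plan is to recognize that both equivalences are obtained by rewriting the classical ball-based definitions as statements about the pretopology $\tilde{d}$ attached to a semi-metric $d$, using only the description of $\tilde{d}$ through its vicinity filters in (\ref{eq:dtilde}). Two structural facts from the convergence framework carry all the weight: first, a pretopology is completely determined by its closure operator (equivalently by its vicinity filters, since for a pretopology $\sigma$ we have $V\in\V_{\sigma}(x)$ if and only if $x\notin\cl_{\sigma}(X\setminus V)$); second, the topological reflection $\T\sigma$ of a pretopology $\sigma$ has exactly the same open sets as $\sigma$, because the family of $\sigma$-open sets already forms a topology.

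For (1), I would first compute the closure in $\tilde{d}$. By the description of the closure in a pretopology, $x\in\cl_{\tilde{d}}A$ if and only if every vicinity of $x$ meets $A$, i.e. $B(x,\epsilon)\cap A\neq\emptyset$ for every $\epsilon>0$; and since $B(x,\epsilon)\cap A\neq\emptyset$ exactly when some $a\in A$ satisfies $d(x,a)<\epsilon$, this amounts to $d(x,A)=0$. Hence $\cl_{\tilde{d}}A=\{x:d(x,A)=0\}$ for every $A\subset X$. With this identity, the semi-metrizability condition (\ref{eq:semimetricclosure}) says precisely that $\cl_{\tau}A=\cl_{\tilde{d}}A$ for all $A$. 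Since the closure operator determines a pretopology, this equality of closures is equivalent to $\tau=\tilde{d}$, which proves both implications at once (and incidentally forces $\tilde{d}$ to be topological whenever $\tau$ is semi-metrizable).

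For (2), I would instead describe the open sets of $\tilde{d}$. A set $O$ is $\tilde{d}$-open if and only if $O\in\V_{\tilde{d}}(x)$ for every $x\in O$, and by (\ref{eq:dtilde}) this means that for each $x\in O$ there is $\epsilon>0$ with $B(x,\epsilon)\subset O$. Thus $\O_{\tilde{d}}$ is exactly the family appearing in the definition of symmetrizability. Because $\T\tilde{d}$ has the same open sets as $\tilde{d}$, the symmetrizability condition---that $O$ is $\tau$-open if and only if it satisfies the ball condition---says exactly that $\O_{\tau}=\O_{\tilde{d}}=\O_{\T\tilde{d}}$. As two topologies coincide as soon as they have the same open sets, this is equivalent to $\tau=\T\tilde{d}$, giving both directions.

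I expect the only genuinely non-clerical point to be the passage, in part (1), from equality of closure operators to equality of the pretopologies themselves: one must invoke that a pretopology is recovered from its closure rather than merely from the family of its open (or closed) sets, so that matching closures yields $\tau=\tilde{d}$ and not just $\tau=\T\tilde{d}$. This is exactly what distinguishes semi-metrizability in (1) from symmetrizability in (2), where only the open sets are constrained. Once these two structural facts are isolated, both statements reduce to the computations of $\cl_{\tilde{d}}$ and $\O_{\tilde{d}}$ above.
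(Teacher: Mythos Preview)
Your proposal is correct and follows essentially the same approach as the paper: both arguments reduce the classical definitions to statements about the vicinity filters of $\tilde{d}$, and your treatment of (2) is virtually identical to the paper's. For (1) the paper instead verifies $\V_{\tilde{d}}(x)=\N_{\tau}(x)$ by a direct double inclusion using (\ref{eq:semimetricclosure}), while you obtain the same conclusion by first computing $\cl_{\tilde{d}}A=\{x:d(x,A)=0\}$ and then invoking that a pretopology is determined by its closure operator; this is the same computation repackaged, with the mild bonus that it handles both implications at once.
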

\begin{proof}
(1) If $\tau$ is semi-metrizable there is a semi-metric $d$ satisfying
(\ref{eq:semimetricclosure}). Then $\tilde{d}\geq\tau$ because $\V_{\tilde{d}}(x)\geq\N_{\tau}(x)$.
Indeed, if $O$ is a $\tau$-open set and $O\notin\V_{\tilde{d}}(x)$,
then $X\setminus O\in(\V_{\tilde{d}}(x))^{\#}$ so that $d(x,X\setminus O)=0$,
hence $x\in\cl_{\tau}(X\setminus O)=X\setminus O$ by (\ref{eq:semimetricclosure}),
that is, $x\notin O$. Conversely, $\tau\geq\tilde{d}$ because $\N_{\tau}(x)\geq\V_{\tilde{d}}(x)$.
Indeed, if there is $\epsilon>0$ with $B(x,\epsilon)\notin\N_{\tau}(x)$
then $X\setminus B(x,\epsilon)\in(\N_{\tau}(x))^{\#}$ so that $x\in\cl_{\tau}(X\setminus B(x,\epsilon))$.
By (\ref{eq:semimetricclosure}), $d(x,X\setminus B(x,\epsilon))=0$
which is not possible, for $d(x,y)\geq\epsilon$ for every $y\notin B(x,\epsilon)$.

(2) The topology $\tau$ is symmetrizable by a semi-metric $d$ if
and only if $O$ is $\tau$-open exactly if for every $x\in O$, there
is $\epsilon>0$ with $B(x,\epsilon)\subset O$, that is, if and only
if $O\in\bigcap_{x\in O}\V_{\tilde{d}}(x)$, equivalently, if $O$
is $\tilde{d}$-open. In other words, $\tau$ is symmetrizable by
$d$ if and only if $\tau$-open sets and $\tilde{d}$-open sets coincide,
that is, if and only if $\tau=\T\tilde{d}.$
\end{proof}
\begin{rem}
A characterization of semi-metrizable (resp. symmetrizable) topologies
by R. Heath in \cite{heath1962arc} is that a topological space $X$
is semi-metrizable if and only if it is $T_{1}$ and for every $x\in X$,
there is a decreasing countable local (resp. pretopological) base
$\{U_{n}(x)\}_{n=1}^{\infty}$ such that $x\in\lm(x_{n})_{n}$ whenever
$x\in U_{n}(x_{n})$ for $n\in\mathbb{N}$.
\end{rem}
Note that we can now more generally define a convergence $\xi$ to
be \emph{semi-metrizable }if $\xi=\tilde{d}$ for some semi-metric
$d$ on $|\xi|$ (which of course imposes that $\xi$ be a pretopology)
and \emph{symmetrizable }if 
\[
\tilde{d}\geq\xi\geq\T\tilde{d}
\]
for some semi-metric $d$ on $|\xi|$.

In view of Theorem \ref{thm:semimet}, Corollary \ref{cor:gfirst}
and the characterizations (\ref{eq:Frechet}) and (\ref{eq:sequential})
of Fréchet and sequential spaces, the following immediate relations
between the notions at hand hold:
\begin{center}
\begin{table}[H]
\begin{centering}
\begin{tabular}{ccc}
Fréchet &  & Sequential\tabularnewline
$\xi\geq\S_{0}\I_{1}\xi$ & $\Longrightarrow$ & $\xi\geq\T\I_{1}\xi$\tabularnewline
$\Uparrow$ &  & $\Uparrow$\tabularnewline
pretop. of countable character &  & of countable $\S_{0}$-character\tabularnewline
$\xi=\S_{0}\xi=\I_{1}\xi$ & $\Longrightarrow$ & $\exists\sigma=\S_{0}\sigma=\I_{1}\sigma:\sigma\geq\xi\geq\T\sigma$\tabularnewline
$\Uparrow$ &  & $\Uparrow$\tabularnewline
semi-metrizable &  & symmetrizable\tabularnewline
$\exists d:\xi=\tilde{d}$ & $\Longrightarrow$ & $\exists d:\tilde{d}\geq\xi\geq\T\tilde{d}$\tabularnewline
\end{tabular}
\par\end{centering}
\bigskip{}

\caption{None of the implications can be reversed in general: The topological
modification of the Féron cross pretopology (e.g, \cite[Example I.2.1]{DM.book})
is symmetrizable but not Fréchet, showing that none of the left to
right implications can be reversed. The sequential fan is a Fréchet
topology that is not of countable $\protect\S_{0}$-character, and
the Sierpi\'nski space is of countable character but is not symmetrizable,
showing that none of the vertical implication can be reversed.}
\label{table1}
\end{table}
\par\end{center}

Analogously to Proposition \ref{prop:HausdorffI1xi}, we have:
\begin{cor}
\label{cor:symtosemi} If $\xi$ is a Hausdorff convergence, then
it is symmetrizable if and only if $\xi$ is sequential and $\I_{1}\xi$
is a semi-metrizable pretopology.
\end{cor}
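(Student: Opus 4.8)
The plan is to mirror the argument of Proposition \ref{prop:HausdorffI1xi}, exploiting the fact that the pretopology $\tilde d$ attached to a semi-metric is automatically of countable character (by \eqref{eq:dtilde}). Under this reading, symmetrizability of $\xi$ is nothing but countable $\S_0$-character of $\xi$ that happens to be witnessed by a \emph{semi-metrizable} pretopology, and the corollary should fall out of the uniqueness clause already established in the Hausdorff case.

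For the forward implication I would start from a semi-metric $d$ on $|\xi|$ with $\tilde d\geq\xi\geq\T\tilde d$. Since $\tilde d$ is a pretopology of countable character, this exhibits $\xi$ as having countable $\S_0$-character, with $\sigma=\tilde d$ serving as the witnessing pretopology. Because $\xi$ is Hausdorff, Proposition \ref{prop:HausdorffI1xi} then applies and yields three things at once: $\xi$ is sequential, $\I_1\xi$ is a pretopology, and $\I_1\xi$ is the \emph{only} pretopology $\sigma$ of countable character satisfying $\sigma\geq\xi\geq\T\sigma$. As $\tilde d$ is exactly such a $\sigma$, the uniqueness clause forces $\tilde d=\I_1\xi$; hence $\I_1\xi$ is a semi-metrizable pretopology, which is what we want.

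The converse is purely formal and, I expect, will not even require the Hausdorff hypothesis. Assuming $\xi$ is sequential and $\I_1\xi$ is a semi-metrizable pretopology, I would write $\I_1\xi=\tilde d$ for some semi-metric $d$ (by the definition of semi-metrizable convergence, i.e.\ Theorem \ref{thm:semimet}(1)). Then $\tilde d=\I_1\xi\geq\xi$ because $\I_1$ is a coreflector, while sequentiality reads $\xi\geq\T\I_1\xi=\T\tilde d$. Chaining these inequalities gives $\tilde d\geq\xi\geq\T\tilde d$, which is precisely the definition of $\xi$ being symmetrizable.

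The only genuine obstacle is the identification $\tilde d=\I_1\xi$ in the forward direction: a priori a symmetrizing semi-metric could induce a pretopology different from the canonical coreflection $\I_1\xi$, and it is exactly here that Hausdorffness is indispensable, entering solely through the uniqueness statement of Proposition \ref{prop:HausdorffI1xi}. Once that identification is in hand, everything else reduces to bookkeeping with the functorial inequalities for $\I_1$, $\S_0$, and $\T$.
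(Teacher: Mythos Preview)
Your proposal is correct and follows essentially the same approach as the paper: invoke Proposition~\ref{prop:HausdorffI1xi} to obtain sequentiality and the uniqueness of the witnessing pretopology, forcing $\tilde d=\I_1\xi$, and handle the converse by the chain $\tilde d=\I_1\xi\geq\xi\geq\T\I_1\xi=\T\tilde d$. Your write-up is a bit more explicit about why $\tilde d$ has countable character and that the converse does not use Hausdorffness, but the argument is the same.
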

\begin{proof}
If $\xi$ is symmetrizable, it is in particular of countable $\S_{0}$-character,
so that $\sigma=\I_{1}\xi$ is the only pretopology of countable character
with $\sigma\geq\xi\geq\T\sigma$. Since there is a semi-metric $d$
with $\tilde{d}\geq\xi\geq\T\tilde{d}$, we conclude that $\tilde{d}=\I_{1}\xi$.

Conversely, if $\xi$ is sequential, that is, $\xi\geq\T\I_{1}\xi$
and $\I_{1}\xi=\tilde{d}$ for some semi-metric $d$, then $\xi$
is symmetrizable.
\end{proof}
Of course, if $\xi$ is Fréchet Hausdorff and symmetrizable then $\xi\geq\S_{0}\I_{1}\xi=\I_{1}\xi=\tilde{d}$
is semi-metrizable.

In other words, the horizontal implications in the diagram above can
be reversed when the space is Hausdorff and Fréchet. This is well
known, and easy to see as we already have proved. The observation
can be traced all the way back to \cite{MR0350706}, it was included
without proof in \cite{harley1976symmetrizable}, and a proof can
be found in the popular survey \cite{Gruenhage84}. Yet approaches
to its proof have remained the subject of more research, e.g., \cite{hong1999notes,hong2002note}
without yielding any refinement. The convergence-theoretic viewpoint
provides as a consequence such a slight refinement by clarifying what
condition is needed if separation is dropped, with an immediate and
transparent proof. The relevant notion is that of \emph{accessibility
space }as introduced by Whyburn in \cite{why}.

\section{Accessibility spaces}

Recall from \cite{why} that a topological space $(X,\tau)$ is \emph{of
accessibility, }or is an \emph{accessibility space,} if for each $x_{0}\in X$
and every $H\subset X$ with $x_{0}\in\cl_{\tau}(H\setminus\{x_{0}\}),$
there is a closed subset $F$ of $X$ with $x_{0}\in\cl(F\setminus\{x_{0}\})$
and $x_{0}\notin\cl_{\tau}(F\setminus H\setminus\{x_{0}\})$. Note
that if $x_{0}\in\cl_{\tau}(H\setminus\{x_{0}\})$ in a Fréchet topological
space, there is a sequence $\{x_{n}\}_{n}$ on $H\setminus\{x_{0}\}$
converging to $x_{0}$. The set $F=\{x_{n}:n\in\mathbb{N}\}\cup\{x_{0}\}$
then witnesses the definition of accessibility, provided that $x_{0}$
is the unique limit point of $\{x_{n}\}_{n}$. Hence, 
\begin{prop}
\label{prop:FHisaccess} A Fréchet topological space in which sequences
have unique limits, in particular a Fréchet Hausdorff space, is of
accessibility.
\end{prop}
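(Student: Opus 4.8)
The plan is to unwind the definition of accessibility space directly, using the characterization of Fréchet spaces in terms of convergent sequences. The statement to prove is Proposition \ref{prop:FHisaccess}: a Fréchet topological space in which sequences have unique limits (in particular a Fréchet Hausdorff space) is of accessibility. The observation preceding the statement already sketches the right candidate witness, so the proof is essentially a verification that this candidate works; my job is to fill in why the two required conditions hold.

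First I would fix $x_0 \in X$ and $H \subset X$ with $x_0 \in \cl_\tau(H \setminus \{x_0\})$. Since $\tau$ is Fréchet, there is a sequence $(x_n)_n$ in $H \setminus \{x_0\}$ with $x_0 \in \lm(x_n)_n$. I would then set $F = \{x_n : n \in \mathbb{N}\} \cup \{x_0\}$ and check the three things required of $F$: that $F$ is closed, that $x_0 \in \cl(F \setminus \{x_0\})$, and that $x_0 \notin \cl_\tau(F \setminus H \setminus \{x_0\})$. The second is immediate because $x_0$ is the limit of the sequence $(x_n)_n$ which lies in $F \setminus \{x_0\}$. The third is in fact trivial: every $x_n$ lies in $H$, so $F \setminus H \setminus \{x_0\} = \emptyset$, whence its closure is empty and certainly does not contain $x_0$. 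So the real content is the first point, closedness of $F$, and this is exactly where the hypothesis on uniqueness of limits enters.

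The main obstacle, then, is establishing that $F$ is closed. The set of points of a convergent sequence together with its limit need not be closed in a general topological space; one typically needs to rule out spurious accumulation. Here I would argue that any point $y \in \cl_\tau F$ must either equal some $x_n$, equal $x_0$, or be a limit of a subsequence drawn from $F$; using the Fréchet property again at $y$, if $y \in \cl_\tau(F \setminus \{y\})$ then there is a sequence in $F \setminus \{y\}$ converging to $y$, and since $F$ consists of the $x_n$ and $x_0$, this forces $y$ to be a limit point of (a subsequence of) $(x_n)_n$. By the uniqueness of sequential limits, the only limit point of $(x_n)_n$ is $x_0$, so $y = x_0 \in F$; if instead $y$ is isolated in $F$ it is one of the listed points and again lies in $F$. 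Hence $\cl_\tau F = F$ and $F$ is closed.

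I should flag one subtlety to handle carefully: the parenthetical remark that the Hausdorff case is a special instance needs the fact that in a Hausdorff space sequences have at most one limit, which is standard, so ``Fréchet Hausdorff'' is subsumed under ``Fréchet with unique sequential limits.'' I would also remark that the uniqueness hypothesis is genuinely used only to pin down the single accumulation point of the chosen sequence, guaranteeing $F$ has no limit points outside itself; without it, a subsequence might converge to some other point and destroy closedness. With these observations the verification is complete, and the proof reduces to the short argument above.
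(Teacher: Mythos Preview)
Your proof is correct and follows exactly the approach the paper sketches in the sentences immediately preceding the proposition: choose a sequence $(x_n)_n$ in $H\setminus\{x_0\}$ converging to $x_0$ and take $F=\{x_n:n\in\mathbb{N}\}\cup\{x_0\}$, with uniqueness of sequential limits ensuring $F$ is closed. The paper leaves the verification of closedness implicit, whereas you spell it out; one small imprecision is that the sequence in $F\setminus\{y\}$ converging to $y$ might be eventually constant (at $x_0$ or some $x_m$) rather than yielding a genuine subsequence of $(x_n)_n$, but uniqueness of limits handles that case just as easily and your conclusion stands.
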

Following \cite{DG}, we say that a pretopology $\tau$ is \emph{topologically
maximal within the class of pretopologies }if 
\[
\sigma=\S_{0}\sigma\geq\tau\text{ and }\T\sigma=\T\tau\then\sigma=\tau.
\]

\begin{thm}
\label{thm:accessiblitiy} \cite{DG}\cite{dolecki1998topologically}
A topology is accessibility if and only if it is topologically maximal
within the class of pretopologies. 
\end{thm}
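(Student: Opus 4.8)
The plan is to translate the order-theoretic definition of topological maximality into a statement about adherence operators and then play it against the combinatorial definition of accessibility. I would first record the two standard facts I intend to lean on: for any pretopology $\sigma$ one has $\adh_{\sigma}A\subseteq\cl_{\T\sigma}A$ for every $A\subseteq|\sigma|$, and $F$ is $\T\sigma$-closed exactly when $\adh_{\sigma}F\subseteq F$. When $\T\sigma=\tau$ these read $\adh_{\sigma}\leq\cl_{\tau}$ and say that the $\adh_{\sigma}$-stable sets are precisely the $\tau$-closed ones. Since a pretopology is determined by its adherence, the conclusion $\sigma=\tau$ in the definition of maximality is equivalent to $\adh_{\sigma}=\cl_{\tau}$; so $\tau$ is topologically maximal within the class of pretopologies precisely when every pretopology $\sigma\geq\tau$ with $\T\sigma=\tau$ already satisfies $\cl_{\tau}\leq\adh_{\sigma}$.

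The technical heart is an observation I would isolate as a lemma: \emph{if $\T\sigma=\tau$, $F$ is $\tau$-closed, and $x_{0}\in\cl_{\tau}(F\setminus\{x_{0}\})$, then $x_{0}\in\adh_{\sigma}(F\setminus\{x_{0}\})$.} Its proof is short: because $F$ is closed and contains $x_{0}$ we have $\cl_{\tau}(F\setminus\{x_{0}\})=F$; were $x_{0}\notin\adh_{\sigma}(F\setminus\{x_{0}\})$, then $\adh_{\sigma}(F\setminus\{x_{0}\})\subseteq\cl_{\tau}(F\setminus\{x_{0}\})=F$ would avoid $x_{0}$, hence equal $F\setminus\{x_{0}\}$ by extensivity, making $F\setminus\{x_{0}\}$ an $\adh_{\sigma}$-stable, i.e.\ $\tau$-closed, set\textemdash contradicting $x_{0}\in\cl_{\tau}(F\setminus\{x_{0}\})$. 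This lemma is exactly what closes the otherwise fatal gap between the one-step adherence $\adh_{\sigma}$ and its transfinite iterate $\cl_{\tau}$: closedness of $F$ upgrades ``$x_{0}$ lies in the iterated closure'' to ``$x_{0}$ lies in the one-step adherence''.

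For accessibility $\then$ maximal, I would prove $\cl_{\tau}\leq\adh_{\sigma}$ for every competing $\sigma$. Given $x_{0}\in\cl_{\tau}A$, the case $x_{0}\in A$ is trivial, so assume $x_{0}\notin A$ and apply accessibility to $H:=A$, obtaining a $\tau$-closed $F$ with $x_{0}\in\cl_{\tau}(F\setminus\{x_{0}\})$ and $x_{0}\notin\cl_{\tau}((F\setminus A)\setminus\{x_{0}\})$. The lemma yields $x_{0}\in\adh_{\sigma}(F\setminus\{x_{0}\})$, while the second clause yields (via $\adh_{\sigma}\leq\cl_{\tau}$) a vicinity $W\in\V_{\sigma}(x_{0})$ missing $(F\setminus A)\setminus\{x_{0}\}$. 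For an arbitrary $U\in\V_{\sigma}(x_{0})$, the vicinity $U\cap W$ still meets $F\setminus\{x_{0}\}$; any witnessing point lies in $F$, differs from $x_{0}$, and lies in $W$, so it cannot lie in $(F\setminus A)\setminus\{x_{0}\}$ and must therefore lie in $F\cap A\subseteq A$. Hence every $U$ meets $A$, giving $x_{0}\in\adh_{\sigma}A$, and thus $\sigma=\tau$.

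For the converse I argue contrapositively and \emph{construct} a strictly finer pretopology with the same topology from a failure of accessibility witnessed by $x_{0},H$. I define $\sigma$ by keeping $\V_{\sigma}(x)=\N_{\tau}(x)$ for $x\neq x_{0}$ and adjoining to $\N_{\tau}(x_{0})$ the single new vicinity $V_{0}:=X\setminus(H\setminus\{x_{0}\})$; here $\sigma>\tau$ because $x_{0}\in\cl_{\tau}(H\setminus\{x_{0}\})$ prevents $V_{0}\in\N_{\tau}(x_{0})$. The real work, and the step I expect to be the main obstacle alongside the lemma, is verifying $\T\sigma=\tau$, i.e.\ that no new open sets appear. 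Toward a contradiction I take a $\sigma$-open $O\ni x_{0}$ that is not a $\tau$-neighborhood of $x_{0}$; since $O$ is automatically a $\tau$-neighborhood of each of its other points, $F:=\cl_{\tau}(X\setminus O)=(X\setminus O)\cup\{x_{0}\}$ is $\tau$-closed and satisfies $x_{0}\in\cl_{\tau}(F\setminus\{x_{0}\})$. Non-accessibility then forces $x_{0}\in\cl_{\tau}((F\setminus H)\setminus\{x_{0}\})$, which I contradict using the inclusion $O\supseteq N\setminus(H\setminus\{x_{0}\})$ (for some $N\in\N_{\tau}(x_{0})$) coming from $O\in\V_{\sigma}(x_{0})$: that inclusion confines $(F\setminus H)\setminus\{x_{0}\}$ inside $X\setminus N$, whence $x_{0}\in\cl_{\tau}(X\setminus N)$, contradicting $x_{0}\in\inh_{\tau}N$. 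So $O$ is $\tau$-open, $\T\sigma=\tau$, and $\tau$ is not maximal, completing the argument.
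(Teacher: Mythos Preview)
The paper does not actually supply a proof of Theorem~\ref{thm:accessiblitiy}; it simply states the result and cites \cite{DG} and \cite{dolecki1998topologically}. So there is no in-paper argument to compare your proposal against.

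That said, your proof is correct and self-contained. Your key lemma\textemdash that whenever $\T\sigma=\tau$, $F$ is $\tau$-closed, and $x_{0}\in\cl_{\tau}(F\setminus\{x_{0}\})$, one already has $x_{0}\in\adh_{\sigma}(F\setminus\{x_{0}\})$\textemdash is exactly the bridge needed between the one-step pretopological adherence and the iterated topological closure, and your proof of it is clean. The forward direction (accessibility $\then$ maximal) then goes through as you wrote it: the closed set $F$ produced by accessibility lets you invoke the lemma, and the second clause of accessibility lets you trim the vicinity so that every $U\in\V_{\sigma}(x_{0})$ meets $A$. For the converse, your construction of $\sigma$ by adjoining the single vicinity $V_{0}=X\setminus(H\setminus\{x_{0}\})$ at $x_{0}$ is the standard move, and your verification that $\T\sigma=\tau$ is correct: the identity $F=\cl_{\tau}(X\setminus O)=(X\setminus O)\cup\{x_{0}\}$ holds because $O$ is a $\tau$-neighborhood of each of its points other than $x_{0}$, and the containment $(F\setminus H)\setminus\{x_{0}\}\subseteq X\setminus N$ follows exactly as you argue, yielding the contradiction with $x_{0}\in\intr_{\tau}N$.

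One cosmetic remark: when you write ``$x_{0}\in\inh_{\tau}N$'' at the end, note that for a topology $\tau$ one has $\inh_{\tau}=\intr_{\tau}$, so this is fine, but it might read more transparently as $x_{0}\notin\cl_{\tau}(X\setminus N)$ directly.
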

In Table \ref{table1} above, note that $\xi=\T\xi$ is a topology,
and, considering in each row bottom to top, $\tilde{d}$, $\sigma$
and $\S_{0}\I_{1}\xi$ are pretopologies finer than $\xi$ with $\xi=\T\tilde{d}$,
$\xi=\T\sigma$ and $\xi=\T(\S_{0}\I_{1}\xi)$ respectively. Thus,
it is an immediate consequence of Theorem \ref{thm:accessiblitiy}
that if $\xi$ is additionally an accessibility space, then $\xi=\tilde{d}$,
$\xi=\sigma$ and $\xi=\S_{0}\I_{1}\xi$ respectively, by topological
maximality: In Table \ref{table1}, the two columns coincide if the
space is an accessibility space.
\begin{cor}
\label{thm:access} An accessibility sequential space is Fréchet;
an accessibility space of countable $\S_{0}$-character has countable
character; an accessibility symmetrizable space is semi-metrizable.
\end{cor}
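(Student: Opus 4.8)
The plan is to deduce all three implications from a single mechanism: the topological maximality of an accessibility topology within the pretopologies, i.e.\ Theorem \ref{thm:accessiblitiy}. Throughout, $\xi$ denotes the topology in question, so $\xi=\T\xi$, and topological maximality reads: if $\sigma$ is a pretopology with $\sigma\geq\xi$ and $\T\sigma=\xi$, then $\sigma=\xi$. The common strategy is, for each ``weak'' hypothesis, to exhibit the canonical pretopology $\mu$ finer than $\xi$ that topologizes back to $\xi$, and then to invoke maximality to collapse $\mu$ onto $\xi$; the resulting equality is exactly the corresponding ``strong'' conclusion. This is precisely the observation, made just before the statement, that the two columns of Table \ref{table1} coincide on an accessibility space.

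Concretely, I would run the three cases in parallel. For symmetrizability, the hypothesis gives a semi-metric $d$ with $\tilde{d}\geq\xi\geq\T\tilde{d}$; applying the monotone reflector $\T$ and using $\xi=\T\xi$ forces $\T\tilde{d}=\xi$, so with $\mu=\tilde{d}$ (a pretopology finer than $\xi$) maximality yields $\xi=\tilde{d}$, which is semi-metrizability. For countable $\S_{0}$-character, the hypothesis supplies a pretopology $\sigma=\S_{0}\sigma=\I_{1}\sigma$ with $\sigma\geq\xi\geq\T\sigma$; the same application of $\T$ gives $\T\sigma=\xi$, maximality gives $\sigma=\xi$, and then $\xi=\I_{1}\xi$ exhibits countable character. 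For sequentiality, $\xi\geq\T\I_{1}\xi$ together with the automatic inequality $\T\I_{1}\xi\geq\T\xi=\xi$ (from $\I_{1}\xi\geq\xi$) gives $\xi=\T\I_{1}\xi$; taking $\mu=\S_{0}\I_{1}\xi$, which is a pretopology finer than $\xi$, we have $\T\mu=\T\S_{0}\I_{1}\xi=\T\I_{1}\xi=\xi$, so maximality forces $\xi=\S_{0}\I_{1}\xi$, which is the Fr\'echet property.

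The only point requiring a moment's care is the identity $\T\S_{0}=\T$ used in the Fr\'echet case, which follows from $\T\leq\S_{0}$ (every topology is a pretopology) by the universal property of the reflectors; everything else is a direct reading of the defining inequalities once $\xi=\T\xi$ is used to collapse them to $\T\mu=\xi$. I do not expect a genuine obstacle here: the whole weight of the argument rests on Theorem \ref{thm:accessiblitiy}, which is quoted, and the preceding paragraph has already assembled the witnessing pretopologies and their topological reflections. Consequently the corollary is immediate, each of its three clauses being one row of Table \ref{table1} read from right to left under the maximality hypothesis.
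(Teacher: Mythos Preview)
Your proposal is correct and follows essentially the same approach as the paper: exhibit, for each weak hypothesis, the witnessing pretopology $\mu\in\{\tilde d,\ \sigma,\ \S_{0}\I_{1}\xi\}$ with $\mu\geq\xi$ and $\T\mu=\xi$, and then invoke the topological maximality of Theorem~\ref{thm:accessiblitiy} to force $\mu=\xi$. The paper states this argument inline just before the corollary (and even less explicitly than you do regarding the identity $\T\S_{0}=\T$), so your write-up is if anything slightly more detailed.
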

In view of Proposition \ref{prop:FHisaccess}, this refines the classical
result that a Fréchet Hausdorff space of countable $\S_{0}$-character
(respectively symmetrizable) has countable character (respectively,
is semi-metrizable) by relaxing the condition Fréchet Hausdorff to
accessibility.

Note also that Proposition \ref{prop:HausdorffI1xi} and Corollary
\ref{cor:symtosemi} delineate the circumstances under which the vertical
implications can be reversed among Hausdorff convergences: between
the first two rows exactly when $\I_{1}\xi$ is a pretopology, between
the lower two rows exactly when $\I_{1}\xi$ is moreover semi-metrizable.

\section{Strongly $\protect\S_{0}$-characterized and strongly symmetrizable
spaces}

Recall that a topological space is \emph{strongly Fréchet }if whenever
$x\in\adh\H$ for a countably based filter $\H$, then there exists
a sequential filter $(x_{n})_{n}$ with $(x_{n})_{n}\geq\H$ and $x\in\lim(x_{n})_{n}$.
Observe that a topology $\tau$ is strongly Fréchet if and only if
$\tau\geq\S_{1}\I_{1}\tau$ where $\S_{1}$ is the paratopologizer.
It is well-known (e.g., \cite[Prop. 4.D.4 and 4.D.5]{quest}) that
\begin{prop}
The following are equivalent for a topological space $X$:
\begin{enumerate}
\item $X$ is strongly Fréchet;
\item $X\times Y$ is strongly Fréchet for every bisequential space $Y$;
\item $X\times Y$ is Fréchet for every prime metrizable topological space
$Y$.
\end{enumerate}
\end{prop}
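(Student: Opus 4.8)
The plan is to prove the chain of equivalences $(1)\Rightarrow(2)\Rightarrow(3)\Rightarrow(1)$, translating everything into the convergence-theoretic inequality $\tau\geq\S_1\I_1\tau$ that characterizes strong Fr\'echetness, and exploiting the duality machinery the paper has set up. Recall that $\I_1$ is the coreflector onto convergences based in countably-based filters and $\S_1$ is the paratopologizer; the key structural fact I would invoke is that $\S_1$ and $\I_1$ are \emph{dual} coreflector/reflector pairs in the sense of coreflectively modified duality, so that a product inequality of the form $\tau\times\upsilon\geq\S_1\I_1(\tau\times\upsilon)$ can be analyzed by pushing the $\I_1$ coreflection across the product onto the factors.

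First I would establish $(1)\Rightarrow(2)$. Here I would use the fact that bisequentiality of $Y$ means precisely that $Y$ is a $\I_1$-based (countably based) convergence that is also strongly Fr\'echet in a form stable under the relevant operation; concretely, bisequential spaces are exactly those whose filters admit countably-based refinements adhering at the same point, which is the property dual to $\S_1$. The strong Fr\'echetness of $X\times Y$ should then follow from a calculus of inequalities: starting from a countably based filter $\H$ on $X\times Y$ with $(x,y)\in\adh\H$, I would project to obtain countably based filters on each factor, use strong Fr\'echetness of $X$ (hypothesis (1)) on the first projection and bisequentiality of $Y$ on the second, and recombine the resulting sequential refinements into a sequential filter on the product refining $\H$. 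The recombination is the delicate point, since the product of two sequential filters need not be sequential, so I would need the countably-based control coming from both $\I_1 X$ and the bisequentiality of $Y$ to force a diagonal sequence; this is where the dual-pair identity $\S_1\I_1(\tau\times\upsilon)$ bounded by a product of the factor-level $\S_1\I_1$ expressions does the work.

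The implication $(2)\Rightarrow(3)$ is the easy step: a prime metrizable topological space $Y$ (a metrizable space with a single non-isolated point, or the appropriate prime convergence) is bisequential, and strongly Fr\'echet spaces are in particular Fr\'echet, so $(2)$ applied to such $Y$ immediately yields that $X\times Y$ is Fr\'echet. I would only need to recall or cite that prime metrizable spaces are bisequential and that strong Fr\'echetness implies Fr\'echetness, both of which are standard.

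The heart of the argument, and the step I expect to be the main obstacle, is $(3)\Rightarrow(1)$: I must manufacture, from the single hypothesis that $X\times Y$ is Fr\'echet for \emph{every} prime metrizable $Y$, a witness to the strong Fr\'echet inequality $\tau\geq\S_1\I_1\tau$ on $X$. The standard device is to take a countably based filter $\H$ on $X$ with $x\in\adh\H$, build a canonical prime metrizable space $Y$ from a decreasing countable base of $\H$ (adjoining a point at infinity whose neighborhood filter realizes $\H$), and form the diagonal-type filter on $X\times Y$ that converges to $(x,\infty)$ precisely when a sequential selection refining $\H$ converges to $x$ in $X$. Applying Fr\'echetness of $X\times Y$ to this filter, one extracts a sequence in $X\times Y$ converging to $(x,\infty)$, and projecting it to $X$ produces the required sequential filter refining $\H$ and converging to $x$. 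The obstacle is precisely the construction and verification of this auxiliary prime metrizable $Y$ so that convergence in the product genuinely encodes the desired sequential refinement of $\H$; I would carry out this construction explicitly, checking that $Y$ is metrizable (its topology is generated by the countable decreasing base) and that the diagonal filter adheres at $(x,\infty)$, and then read off $(1)$ from the projected convergent sequence.
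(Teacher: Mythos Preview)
The paper does not give its own proof of this proposition: it is stated as well-known and attributed to Michael's \emph{quintuple quotient quest} via \cite[Prop.~4.D.4 and 4.D.5]{quest}. So there is no proof in the paper to compare your attempt against; the result functions purely as background motivating the analogous statements for strongly sequential, strongly $\S_0$-characterized, and strongly symmetrizable spaces.

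That said, your sketch is broadly the standard route. The step $(2)\Rightarrow(3)$ is fine, and your plan for $(3)\Rightarrow(1)$ --- encoding a countably based filter $\H$ as the neighborhood filter of the non-isolated point of a prime metrizable space $Y$, then extracting a sequence from the Fr\'echetness of $X\times Y$ --- is exactly the classical argument. Your handling of $(1)\Rightarrow(2)$ is the weakest part: the description of bisequentiality as ``$\I_1$-based and strongly Fr\'echet in a form stable under the relevant operation'' is not quite right (bisequentiality says every convergent ultrafilter is finer than a countably based convergent filter, which is strictly stronger than being strongly Fr\'echet), and the appeal to an unspecified ``dual-pair identity'' bounding $\S_1\I_1(\tau\times\upsilon)$ by a product of factor-level expressions needs to be made precise or replaced by the direct diagonal argument that Michael uses. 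If you want to write this out in full rather than cite it, the cleanest route for $(1)\Rightarrow(2)$ is the elementary one: given a decreasing countably based filter $\H$ on $X\times Y$ adhering at $(x,y)$, use bisequentiality of $Y$ on the projection to get a countably based filter converging to $y$, intersect levelwise with $\H$, and then apply strong Fr\'echetness of $X$ to the resulting $X$-projections to build a diagonal sequence.
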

\emph{Strongly sequential }spaces were introduced by analogy, and
characterized internally (\footnote{Namely, a $T_{1}$ topology is strongly sequential if 
\[
\lm\F=\bigcap_{\mathbb{F}_{1}\ni\H\#\F}\cl_{\Seq\xi}(\adh_{\Seq\xi}\H)
\]
for every filter $\F$, to be compared with the fact that a topology
is strongly Fréchet if 
\[
\lm\F=\bigcap_{\mathbb{F}_{1}\ni\H\#\F}\adh_{\Seq\xi}\H
\]
for every filter $\F$.}) in \cite{mynard.strong}. They satisfy
\begin{prop}
The following are equivalent for a topological space $X$:
\begin{enumerate}
\item $X$ is strongly sequential;
\item $X\times Y$ is strongly sequential for every bisequential space $Y$;
\item $X\times Y$ is sequential for every prime metrizable topological
space $Y$.
\end{enumerate}
\end{prop}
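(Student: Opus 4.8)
The plan is to prove the cycle $(1)\Rightarrow(2)\Rightarrow(3)\Rightarrow(1)$, following verbatim the template of the preceding proposition on strongly Fr\'echet spaces, with ``sequential'' systematically replacing ``Fr\'echet'' and with the internal characterization recalled in the footnote---$\lim_\xi\F=\bigcap_{\mathbb{F}_1\ni\H\#\F}\cl_{\Seq\xi}\adh_{\Seq\xi}\H$---playing the role that $\lim_\xi\F=\bigcap_{\mathbb{F}_1\ni\H\#\F}\adh_{\Seq\xi}\H$ plays there. The conceptual point organizing the argument is that strong sequentiality is the $\mathbb{F}_1$-coreflective modification of sequentiality (just as strong Fr\'echetness is the $\mathbb{F}_1$-modification of Fr\'echetness), so that the general product theorem of coreflectively modified duality \cite{DM.products,mynard.strong} applies once the matching test class is recognized to be that of bisequential spaces. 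Recall that bisequentiality of $Y$ means precisely that every filter $\F$ with $y\in\adh_Y\F$ admits a countably based $\H\#\F$ with $y\in\lim_Y\H$; this is exactly the feature needed to transfer the $\mathbb{F}_1$-indexed infima across a product.

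For $(1)\Rightarrow(2)$ I would apply that product theorem directly. At the level of filters the mechanism is: given a filter on $X\times Y$, project it, use bisequentiality of $Y$ to replace its $Y$-trace by a countably based filter convergent in $Y$, feed the $X$-trace into the strong-sequentiality characterization of $X$, and recombine, exploiting that a product of countably based filters is again countably based and that $\Seq$ of the product is controlled by Proposition \ref{prop:coincideSeq}. This reproduces the $\cl_{\Seq\xi}\adh_{\Seq\xi}$ formula for $X\times Y$. The implication $(2)\Rightarrow(3)$ is then immediate: a prime metrizable topological space is first countable, hence bisequential, so $X\times Y$ is strongly sequential by $(2)$, and strongly sequential spaces are sequential (by design, analogously to strongly Fr\'echet $\Rightarrow$ Fr\'echet), whence $X\times Y$ is sequential.

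The substance is $(3)\Rightarrow(1)$, which I would establish contrapositively. Assume $X$ fails the footnote characterization: there are a filter $\F$ and a point $x$ with $x\in\cl_{\Seq\xi}\adh_{\Seq\xi}\H$ for every countably based $\H\#\F$, yet $x\notin\lim_\xi\F$ (one inclusion of the formula always holds, so only this defect can occur). From a countably based filter associated with $\F$ I would build a prime metrizable space $Y$: take a discrete copy of the relevant index set and adjoin a single non-isolated point $\omega$ whose decreasing countable neighbourhood base is read off from that filter. Then I would exhibit a subset $A\subset X\times Y$---a graph/diagonal encoding the filter data---with the two properties that $x\in\cl_{\Seq\xi}\adh_{\Seq\xi}\H$ (for all admissible $\H$) forces $(x,\omega)\in\cl_{X\times Y}A$, while $x\notin\lim_\xi\F$ forces $A$ to be sequentially closed in $X\times Y$. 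A set that is sequentially closed but not closed contradicts the sequentiality of $X\times Y$ supplied by $(3)$, completing the contrapositive.

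The step I expect to be the main obstacle is precisely the one that separates this proposition from its strongly Fr\'echet analogue: the doubled operator $\cl_{\Seq\xi}\adh_{\Seq\xi}$ in place of the single $\adh_{\Seq\xi}$. In the Fr\'echet setting, membership of $x$ in $\adh_{\Seq\xi}\H$ yields a single sequence approaching $x$, and the encoding set $A$ is a plain graph whose sequential closedness is a one-step verification. Here membership in $\cl_{\Seq\xi}\adh_{\Seq\xi}\H$ only provides a sequence approaching the adherence set $\adh_{\Seq\xi}\H$, whose points are themselves merely limits of sequences drawn from $\H$; converting this two-layer approximation into the single closure statement $(x,\omega)\in\cl_{X\times Y}A$ requires a double-sequence construction for $A$, and checking that $A$ remains sequentially closed must be carried through the iteration. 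It is exactly this iterated extraction that consumes the full strength of sequentiality---rather than Fr\'echetness---of the product, and the delicate point throughout is to keep the meshing relation $\H\#\F$ intact across both layers.
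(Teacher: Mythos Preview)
The paper does not give a proof of this proposition: it is stated as a known result, with a reference to \cite{mynard.strong}, and the functorial apparatus $\Epi_{\I_{1}}$ is only introduced immediately afterwards. In the paper's own idiom, however, the equivalence $(1)\!\iff\!(3)$ is a one-line consequence of \eqref{eq:Epi} and \eqref{eq:strnglyseq}: $\xi$ is strongly sequential iff $\xi\geq\Epi_{\I_{1}}\I_{1}\xi$, and by the second line of \eqref{eq:Epi} this holds iff $\xi\times\tau\geq\T(\I_{1}\xi\times\tau)$ for every prime metrizable $\tau$; since $\tau=\I_{1}\tau$ and products of countably based filters are countably based, $\I_{1}\xi\times\tau=\I_{1}(\xi\times\tau)$, so the right-hand side is exactly the sequentiality of $\xi\times\tau$. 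The implication $(1)\Rightarrow(2)$ comes from the first line of \eqref{eq:Epi} in the same way, and $(2)\Rightarrow(3)$ is trivial.

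Your plan is genuinely different in spirit: rather than hide the construction inside the definition of $\Epi_{\I_{1}}$, you propose to carry out $(3)\Rightarrow(1)$ by an explicit contrapositive, building a prime metrizable $Y$ and a witnessing set $A$ from a filter on which the footnote characterization fails. This is essentially how the result was first obtained in \cite{mynard.strong}, and you have correctly located the only real difficulty---namely that the iterated operator $\cl_{\Seq\xi}\adh_{\Seq\xi}$ forces a two-layer diagonal construction for $A$, whereas the strongly Fr\'echet analogue needs only one layer. Your sketch is sound; what the algebraic route buys is that this iteration is absorbed once and for all into the idempotence and the product behaviour of $\Epi_{\I_{1}}$, so that nothing beyond \eqref{eq:Epi} needs to be checked. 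What your route buys is that it is self-contained and does not presuppose the machinery of coreflectively modified duality.
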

Accordingly, if $P$ is a topological property, then we say that $X$
is \emph{strongly $P$ }if $X\times Y$ is $P$ for every metrizable
space $Y$.

A product of a topology of countable $\S_{0}$-character with a metrizable
topology need not  have countable $\S_{0}$-character, and a product
of a symmetrizable topology with a metrizable topology need not be
symmetrizable, as shows \cite[Example 4.5]{Tanakonsym}, which is
an example of a symmetrizable (hence of countable $\S_{0}$-character)
space and a metrizable space whose product is not even sequential.

As far as we know, the spaces whose product with every metrizable
topology have countable $\S_{0}$-character, and those whose product
with every metrizable topology is symmetrizable, have not been characterized.
We set out to do this in this section. To apply our convention regarding
``strongly $P$'', it will be convenient to say a convergence is\emph{
countably characterized }if it has countable character and \emph{countably
}$\S_{0}$-\emph{characterized }if it has countable $\S_{0}$-character. 

Recall that the reflector $\Epi_{\I_{1}}$ associates to a convergence
$\xi$ the coarsest among convergences $\theta$ on $|\xi|$ satisfying
\[
\forall\tau=\I_{1}\tau,\;\T(\xi\times\tau)=\T(\theta\times\tau).
\]

We invite the reader to consult \cite{mynard}, \cite{mynard.strong}
or \cite{DM.book} for various characterizations and applications
of this functor. For our purpose, it is enough to know \cite{mynard.strong}
that $\xi\geq\Epi_{\I_{1}}\xi\geq\T\xi$ for every $\xi$ and that
$\Epi_{\I_{1}}\xi$ is characterized by
\begin{eqnarray}
\theta\geq\Epi_{\I_{1}}\xi & \iff & \forall\tau=\I_{1}\tau,\;\theta\times\tau\geq\Epi_{\I_{1}}(\xi\times\tau)\label{eq:Epi}\\
 & \iff & \forall\tau\text{ prime metrizable topology},\;\theta\times\tau\geq\T(\xi\times\tau).\nonumber 
\end{eqnarray}

Strongly sequential spaces were characterized \cite{mynard.strong}
as those topologies $\tau$ satisfying
\begin{equation}
\tau\geq\Epi_{\I_{1}}\I_{1}\tau=\Epi_{\I_{1}}\Seq\tau.\label{eq:strnglyseq}
\end{equation}
\emph{ }We note that:
\begin{prop}
\cite[Proposition 4]{Myn.strongmore} \label{prop:fromstrongmore}
A regular sequential locally countably compact topology is strongly
sequential.
\end{prop}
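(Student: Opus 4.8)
The plan is to reduce the statement, by the functorial calculus of the previous section, to a single concrete sequentiality assertion about products, and then to settle that assertion by a point-set extraction powered by the three hypotheses.

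First I would unwind the characterization (\ref{eq:strnglyseq}): $\tau$ is strongly sequential precisely when $\tau\geq\Epi_{\I_1}\I_1\tau$. Applying the second equivalence in (\ref{eq:Epi}) with $\theta=\tau$ and $\xi=\I_1\tau$, this amounts to
\[
\tau\times\mu\geq\T(\I_1\tau\times\mu)\qquad\text{for every prime metrizable topology }\mu.
\]
Since $\I_1$ is a coreflector, $\I_1\tau\geq\tau$, so taking products and then $\T$ gives $\T(\I_1\tau\times\mu)\geq\T(\tau\times\mu)=\tau\times\mu$ for free; only the reverse inequality is at stake. Here I would use two general facts: that $\I_1$ is submultiplicative, i.e. $\I_1(\tau\times\mu)\geq\I_1\tau\times\I_1\mu$, and that a prime metrizable $\mu$ has countable character, so $\I_1\mu=\mu$. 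Hence $\I_1(\tau\times\mu)\geq\I_1\tau\times\mu$, whence $\T\I_1(\tau\times\mu)\geq\T(\I_1\tau\times\mu)$. Consequently it suffices to prove that $\tau\times\mu$ is \emph{sequential}, i.e. $\tau\times\mu\geq\T\I_1(\tau\times\mu)$, for every prime metrizable $\mu$: this at once yields $\tau\times\mu\geq\T\I_1(\tau\times\mu)\geq\T(\I_1\tau\times\mu)$, which is what is wanted.

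Next I would prove that the product is sequential. Write $X=|\tau|$ and let $\infty$ be the unique non-isolated point of $\mu$, with a decreasing countable neighborhood base $\{V_k\}$ and $\bigcap_k V_k=\{\infty\}$. For a sequentially closed $A\subseteq X\times\mu$ each section $A_s=\{x:(x,s)\in A\}$ is sequentially closed in $X$, hence closed because $\tau$ is sequential. Given $(x_0,\infty)\in\cl_{\tau\times\mu}A$ I must show $(x_0,\infty)\in A$. The case $x_0\in A_\infty$ is immediate, so assume $x_0\notin A_\infty=\cl A_\infty$; then $(x_0,\infty)$ is approximated only from the isolated fibres, meaning that for every neighborhood $U$ of $x_0$ and every $k$ there are $s\in V_k\setminus\{\infty\}$ and a point of $U\cap A_s$. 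Using local countable compactness together with regularity I would fix a closed countably compact neighborhood $K=\cl W$ of $x_0$ with $K\cap A_\infty=\emptyset$. The goal is then to extract a sequence $(y_j,s_j)\in A$ with $y_j\in K$, $s_j\to\infty$ in $\mu$, and $y_j\to x_0$ in $\tau$; since $A$ is sequentially closed this forces $(x_0,\infty)\in A$, the desired contradiction.

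The hard part is precisely this extraction, and it is where all three hypotheses must cooperate. The point $x_0$ need not have a countable neighborhood base, so one cannot diagonalize naively. My plan is to exploit that $K$, being closed in the sequential space $\tau$, is itself sequential, and being countably compact is therefore sequentially compact; the points supplied by the approximation property then have cluster points in $K$, and regularity---used to intersect closed neighborhoods of $x_0$ along the countable base $\{V_k\}$---serves to pin such a cluster point to $x_0$ and to upgrade clustering to genuine convergence $y_j\to x_0$ while keeping $s_j\to\infty$. I expect this fusion of countable compactness (for cluster points), sequentiality (to turn clustering into convergent subsequences), and regularity (to localize and to force the limit to be $x_0$) to be the only delicate point; the remainder is the bookkeeping of (\ref{eq:Epi}) carried out above. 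It is worth noting that the naive shortcut---writing $\tau$ as a quotient of a metric space and invoking a Whitehead-type theorem---fails here, because a prime metrizable space (e.g. a metric fan) need not be locally compact; this is exactly why the hypotheses on $\tau$ are indispensable.
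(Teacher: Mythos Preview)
The paper does not prove this proposition; it merely cites \cite[Proposition 4]{Myn.strongmore}. So there is no in-paper argument to compare your proposal against, and the relevant question is simply whether your sketch is sound.

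Your reduction is correct and efficient: via (\ref{eq:Epi}) and the submultiplicativity $\I_{1}(\tau\times\mu)\geq\I_{1}\tau\times\I_{1}\mu=\I_{1}\tau\times\mu$, it does suffice to show that $\tau\times\mu$ is sequential for every prime metrizable $\mu$. The setup of the extraction is also fine: sections $A_{s}$ are closed by sequentiality of $\tau$; regularity and local countable compactness together produce a \emph{closed} countably compact neighborhood $K$ of $x_{0}$ with $K\cap A_{\infty}=\emptyset$; and a closed subspace of a sequential space is sequential, so $K$ is sequential and countably compact, hence sequentially compact.

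Where you overcomplicate matters is the last step. You aim to force $y_{j}\to x_{0}$ and worry about ``pinning the cluster point to $x_{0}$'' via regularity---but this is unnecessary and, as you yourself note, problematic since $x_{0}$ need not have a countable base. The contradiction comes for free one step earlier: having chosen $(y_{k},s_{k})\in A\cap(W\times V_{k})$ with $y_{k}\in K$ and $s_{k}\neq\infty$ (the latter because $K\cap A_{\infty}=\emptyset$), sequential compactness of $K$ gives a subsequence $y_{k_{j}}\to y^{*}\in K$; then $(y_{k_{j}},s_{k_{j}})\to(y^{*},\infty)$, so $(y^{*},\infty)\in A$ by sequential closedness of $A$, i.e., $y^{*}\in A_{\infty}\cap K=\emptyset$. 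No need for $y^{*}=x_{0}$. With this simplification your argument is complete; the ``delicate fusion'' you anticipate is not actually required.
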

A convergence $\xi$ is called \emph{strongly countably} $\S_{0}$\emph{-characterized
}if there is a pretopology $\sigma$ of countable character with 
\[
\sigma\geq\xi\geq\Epi_{\I_{1}}\sigma.
\]

\begin{prop}
\label{prop:stronggfirst} A Hausdorff convergence $\xi$ is strongly
countably $\S_{0}$-characterized if and only if it is countably $\S_{0}$-characterized
and strongly sequential.
\end{prop}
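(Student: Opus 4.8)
The plan is to prove both implications directly from the definitions, leaning on Proposition \ref{prop:HausdorffI1xi} as the central tool together with the general inequality $\Epi_{\I_1}\sigma\geq\T\sigma$, valid for every convergence. The key observation, which I expect to carry essentially the whole argument, is that Hausdorffness forces the pretopology witnessing countable $\S_0$-character to be \emph{unique}, namely $\sigma=\I_1\xi$; this is exactly what lets me translate between the ``$\Epi_{\I_1}\sigma$'' formulation of being strongly countably $\S_0$-characterized and the ``$\Epi_{\I_1}\I_1\xi$'' formulation of strong sequentiality in \eqref{eq:strnglyseq}.

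For the forward implication, I would assume $\xi$ is strongly countably $\S_0$-characterized, so that there is a pretopology $\sigma$ of countable character with $\sigma\geq\xi\geq\Epi_{\I_1}\sigma$. Since $\Epi_{\I_1}\sigma\geq\T\sigma$, the weaker chain $\sigma\geq\xi\geq\T\sigma$ holds as well, so $\xi$ is countably $\S_0$-characterized. Proposition \ref{prop:HausdorffI1xi} then applies: because $\xi$ is Hausdorff, the witnessing $\sigma$ must coincide with $\I_1\xi$. Substituting this back into $\xi\geq\Epi_{\I_1}\sigma$ yields $\xi\geq\Epi_{\I_1}\I_1\xi$, which is precisely the condition \eqref{eq:strnglyseq} for $\xi$ to be strongly sequential.

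For the converse, I would assume $\xi$ is countably $\S_0$-characterized and strongly sequential. Hausdorffness and Proposition \ref{prop:HausdorffI1xi} again identify the witnessing pretopology as $\sigma:=\I_1\xi$, a pretopology of countable character satisfying $\sigma\geq\xi\geq\T\sigma$. Strong sequentiality reads $\xi\geq\Epi_{\I_1}\I_1\xi=\Epi_{\I_1}\sigma$, and combining the two inequalities gives $\sigma\geq\xi\geq\Epi_{\I_1}\sigma$, which is exactly the definition of being strongly countably $\S_0$-characterized, witnessed by $\sigma$.

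The main, and essentially only, obstacle is to justify the passage to $\sigma=\I_1\xi$: both directions hinge on the uniqueness clause of Proposition \ref{prop:HausdorffI1xi}, and it is precisely there that the Hausdorff hypothesis enters. Once the witnessing pretopology is pinned down to $\I_1\xi$, what remains is routine algebra of the inequalities $\Epi_{\I_1}\sigma\geq\T\sigma$ and the two defining chains, with no further convergence-theoretic input required.
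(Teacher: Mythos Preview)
Your proposal is correct and follows essentially the same route as the paper's own proof: both directions hinge on Proposition~\ref{prop:HausdorffI1xi} to identify the witnessing pretopology with $\I_{1}\xi$, together with the inequality $\Epi_{\I_{1}}\sigma\geq\T\sigma$, and the remaining steps are the same algebra of inequalities you outline.
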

\begin{proof}
If $\xi$ is strongly countably $\S_{0}$-characterized then there
is a pretopology $\sigma$ of countable character with $\xi\geq\Epi_{I_{1}}\sigma\geq\T\sigma$
and thus $\xi$ is countably $\S_{0}$-characterized. Since $\xi$
is also Hausdorff, $\sigma=\I_{1}\xi$ by Proposition \ref{prop:HausdorffI1xi},
so that $\xi\geq\Epi_{\I_{1}}\I_{1}\xi$ and $\xi$ is strongly sequential.

Conversely, if $\xi$ is countably $\S_{0}$-characterized and Hausdorff,
then $\I_{1}\xi$ is a pretopology by Proposition \ref{prop:HausdorffI1xi}.
If moreover $\xi\geq\Epi_{I_{1}}\I_{1}\xi$, then $\sigma=\I_{1}\xi$
witnesses the definition of $\xi$ being strongly countably $\S_{0}$-characterized.
\end{proof}
\begin{thm}
\label{thm:stronglygfirst} The following are equivalent for a Hausdorff
convergence $\xi$:
\begin{enumerate}
\item $\xi$ is strongly countably $\S_{0}$-characterized;
\item $\xi\times\tau$ is strongly countably $\S_{0}$-characterized for
every countably characterized pretopology $\tau$;
\item $\xi\times\tau$ is countably $\S_{0}$-characterized for every prime
metrizable topology $\tau$.
\end{enumerate}
\end{thm}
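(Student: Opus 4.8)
The plan is to prove the cycle $(1)\then(2)\then(3)\then(1)$, in which the first two implications are formal and the last carries the real content. For $(1)\then(2)$, let $\sigma$ be a pretopology of countable character witnessing $\sigma\geq\xi\geq\Epi_{\I_{1}}\sigma$, and let $\tau$ be a countably characterized pretopology, so that $\tau=\S_{0}\tau=\I_{1}\tau$. I would guess that $\sigma\times\tau$ witnesses the same property for $\xi\times\tau$. It is again a pretopology of countable character, since products of pretopologies are pretopologies and $\I_{1}$ is multiplicative (a fact I establish below), while $\sigma\times\tau\geq\xi\times\tau$ is clear from monotonicity of products. The crux is that applying the first form of the defining equivalence (\ref{eq:Epi}) to $\xi\geq\Epi_{\I_{1}}\sigma$ with the admissible factor $\tau=\I_{1}\tau$ yields exactly $\xi\times\tau\geq\Epi_{\I_{1}}(\sigma\times\tau)$, so that $\sigma\times\tau$ indeed witnesses $(2)$. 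The implication $(2)\then(3)$ is then immediate: a prime metrizable topology is in particular a metrizable, hence countably characterized, pretopology, so $(2)$ gives that $\xi\times\tau$ is strongly countably $\S_{0}$-characterized; and since $\Epi_{\I_{1}}\rho\geq\T\rho$ for any $\rho$, a witness to being strongly countably $\S_{0}$-characterized is a witness to being countably $\S_{0}$-characterized.

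For $(3)\then(1)$ I would use the Hausdorff hypothesis to invoke Proposition \ref{prop:stronggfirst}, reducing the goal to showing that $\xi$ is countably $\S_{0}$-characterized and strongly sequential. The linchpin is the multiplicativity of the coreflector $\I_{1}$: for arbitrary convergences one has $\I_{1}(\xi\times\tau)=\I_{1}\xi\times\I_{1}\tau$, which I would prove by a direct filter computation, using that a product of two countably based filters is countably based and that $\H_{1}\times\H_{2}\leq\mathcal{W}$ whenever $\H_{i}\leq p_{i}\mathcal{W}$ for the coordinate projections $p_{i}$. Granting this, fix any prime metrizable topology $\tau$; then $\tau=\I_{1}\tau$, and $\xi\times\tau$ is Hausdorff and, by $(3)$, countably $\S_{0}$-characterized. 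Proposition \ref{prop:HausdorffI1xi} then yields both that $\I_{1}(\xi\times\tau)=\I_{1}\xi\times\tau$ is a pretopology and that $\xi\times\tau\geq\T\I_{1}(\xi\times\tau)=\T(\I_{1}\xi\times\tau)$.

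From these two consequences I extract the two halves of the reduced goal. Taking $\tau$ to be a nontrivial convergent sequence, a prime metrizable topology with an isolated point $t_{0}$, the slice $\I_{1}\xi\times\{t_{0}\}$ is a subspace of the pretopology $\I_{1}\xi\times\tau$, hence a pretopology, and it is isomorphic to $\I_{1}\xi$; this shows that $\I_{1}\xi$ is a pretopology. On the other hand, the inequality $\xi\times\tau\geq\T(\I_{1}\xi\times\tau)$ holding for every prime metrizable $\tau$ is, by the second form of (\ref{eq:Epi}), precisely $\xi\geq\Epi_{\I_{1}}\I_{1}\xi$, that is, strong sequentiality. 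Since $\xi\geq\Epi_{\I_{1}}\I_{1}\xi\geq\T\I_{1}\xi$, strong sequentiality also forces sequentiality, so together with $\I_{1}\xi$ being a pretopology, Proposition \ref{prop:HausdorffI1xi} gives that $\xi$ is countably $\S_{0}$-characterized; Proposition \ref{prop:stronggfirst} then upgrades this to strongly countably $\S_{0}$-characterized, closing the cycle. I expect the main obstacle to lie entirely in this last implication, specifically in establishing the multiplicativity $\I_{1}(\xi\times\tau)=\I_{1}\xi\times\I_{1}\tau$ cleanly and in justifying that being a pretopology passes from $\I_{1}\xi\times\tau$ down to the slice; once these are in place, the characterization (\ref{eq:Epi}) together with Propositions \ref{prop:HausdorffI1xi} and \ref{prop:stronggfirst} assembles the conclusion mechanically.
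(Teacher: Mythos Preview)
Your proposal is correct and follows essentially the same route as the paper: the cycle $(1)\then(2)\then(3)\then(1)$, with $(1)\then(2)$ via the defining property (\ref{eq:Epi}) of $\Epi_{\I_{1}}$ and the witness $\sigma\times\tau$, and $(3)\then(1)$ via the multiplicativity $\I_{1}(\xi\times\tau)=\I_{1}\xi\times\tau$ together with Proposition~\ref{prop:HausdorffI1xi} to recover $\xi\times\tau\geq\T(\I_{1}\xi\times\tau)$, whence $\xi\geq\Epi_{\I_{1}}\I_{1}\xi$ by (\ref{eq:Epi}).

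The only notable difference is organizational. To obtain that $\I_{1}\xi$ is a pretopology, the paper simply takes $\tau$ to be a singleton in $(3)$, which gives directly that $\xi$ itself is countably $\S_{0}$-characterized; Proposition~\ref{prop:HausdorffI1xi} then immediately yields that $\I_{1}\xi$ is pretopological. Your slice argument through $\I_{1}\xi\times\{t_{0}\}$ reaches the same conclusion but is a detour: once you allow $\tau$ to range over all prime metrizable topologies, the singleton is already among them and does the job in one step. Otherwise the two arguments coincide, and your explicit discussion of the multiplicativity of $\I_{1}$ fills in a step the paper uses tacitly.
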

\begin{proof}
$(1)\then(2)$: If $\xi\geq\Epi_{I_{1}}\sigma$ where $\sigma=\I_{1}\sigma=\S_{0}\sigma\geq\xi$
and $\tau=\I_{1}\tau=\S_{0}\tau$ then
\[
\xi\times\tau\geq\Epi_{\I_{1}}(\sigma\times\tau)
\]
and $\sigma\times\tau$ is a pretopology of countable character finer
that $\xi\times\tau$, so that $\xi\times\tau$ is strongly countably
$\S_{0}$-characterized.

$(2)\then(3)$ is clear.

$(3)\then(1)$: If conversely, $\xi\times\tau$ is countably $\S_{0}$-characterized
for every prime topology $\tau$ of countable character, then in particular,
taking a singleton for $\tau$, $\xi$ must be countably $\S_{0}$-characterized.
As $\xi$ is Hausdorff and prime topologies are Hausdorff, we conclude
that $\xi\times\tau$ is Hausdorff and countably $\S_{0}$-characterized
whenever $\tau$ is a prime metrizable topology. In view of Proposition
\ref{prop:HausdorffI1xi}, 
\[
\I_{1}(\xi\times\tau)=\I_{1}\xi\times\tau
\]
is the only pretopology of countable character witnessing the definition
of $\xi\times\tau$ being countably $\S_{0}$-characterized. Hence
\[
\xi\times\tau\geq\T(\I_{1}\xi\times\tau)
\]
for every prime metrizable topology $\tau$ and thus $\xi\geq\Epi_{\I_{1}}\I_{1}\xi$
by (\ref{eq:Epi}). Hence $\xi$ is strongly countably $\S_{0}$-characterized.
\end{proof}
Similarly, we call \emph{strongly symmetrizable }a convergence $\xi$
for which there is a semi-metric $d$ on $|\xi|$ with 
\[
\tilde{d}\geq\xi\geq\Epi_{\I_{1}}\tilde{d},
\]
and we obtain with similar arguments that
\begin{prop}
A Hausdorff convergence $\xi$ is strongly symmetrizable if and only
if it is strongly sequential and symmetrizable.
\end{prop}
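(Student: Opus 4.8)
The plan is to mirror the proof of Proposition \ref{prop:stronggfirst}, using the semi-metric pretopology $\tilde{d}$ in place of the abstract witness $\sigma$ and invoking Corollary \ref{cor:symtosemi} in place of Proposition \ref{prop:HausdorffI1xi} to pin that witness down. Two facts drive the argument: the general inequality $\Epi_{\I_1}\tilde{d}\geq\T\tilde{d}$, a special case of $\xi\geq\Epi_{\I_1}\xi\geq\T\xi$; and the observation that for a semi-metric $d$ the pretopology $\tilde{d}$ is a pretopology of countable character, by Proposition \ref{prop:protometrizable}.

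For the forward implication, suppose $\xi$ is strongly symmetrizable, so there is a semi-metric $d$ with $\tilde{d}\geq\xi\geq\Epi_{\I_1}\tilde{d}$. Chaining with $\Epi_{\I_1}\tilde{d}\geq\T\tilde{d}$ gives $\tilde{d}\geq\xi\geq\T\tilde{d}$, so $\xi$ is symmetrizable. Since $\xi$ is Hausdorff, $\tilde{d}$ is then a pretopology of countable character satisfying $\tilde{d}\geq\xi\geq\T\tilde{d}$, so by the uniqueness clause of Proposition \ref{prop:HausdorffI1xi} (as used in the proof of Corollary \ref{cor:symtosemi}) we must have $\tilde{d}=\I_1\xi$. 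Substituting this identity into $\xi\geq\Epi_{\I_1}\tilde{d}$ yields $\xi\geq\Epi_{\I_1}\I_1\xi$, which is exactly strong sequentiality by (\ref{eq:strnglyseq}).

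For the converse, suppose $\xi$ is Hausdorff, symmetrizable, and strongly sequential. By Corollary \ref{cor:symtosemi}, symmetrizability together with Hausdorffness yields a semi-metric $d$ with $\I_1\xi=\tilde{d}$. Strong sequentiality reads $\xi\geq\Epi_{\I_1}\I_1\xi=\Epi_{\I_1}\tilde{d}$, while $\tilde{d}=\I_1\xi\geq\xi$ holds because $\I_1$ is a coreflector. Combining, $\tilde{d}\geq\xi\geq\Epi_{\I_1}\tilde{d}$, which witnesses that $\xi$ is strongly symmetrizable.

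The only genuinely delicate point, and the analogue of the single substantive step in Proposition \ref{prop:stronggfirst}, is the identification $\tilde{d}=\I_1\xi$ in the Hausdorff symmetrizable case. This is precisely where Hausdorffness enters: without the uniqueness of the countable-character pretopology witness guaranteed by Proposition \ref{prop:HausdorffI1xi}, the semi-metric witnessing strong symmetrizability could in principle differ from the one controlling $\I_1\xi$, and the passage from $\Epi_{\I_1}\tilde{d}$ to $\Epi_{\I_1}\I_1\xi$ would fail.
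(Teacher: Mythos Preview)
Your proof is correct and follows exactly the approach the paper indicates: the paper does not spell out a separate proof for this proposition but simply writes ``we obtain with similar arguments'' after proving Proposition~\ref{prop:stronggfirst}, and your argument is precisely the semi-metric analogue of that proof, with Corollary~\ref{cor:symtosemi} playing the role of Proposition~\ref{prop:HausdorffI1xi} in identifying the witness as $\I_{1}\xi$.
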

\begin{thm}
\label{thm:stronglygfirst-1} The following are equivalent for a Hausdorff
convergence $\xi$:
\begin{enumerate}
\item $\xi$ is strongly symmetrizable;
\item $\xi\times\tau$ is strongly symmetrizable for every semi-metrizable
pretopology $\tau$;
\item $\xi\times\tau$ is symmetrizable for every prime metrizable topology
$\tau$.
\end{enumerate}
\end{thm}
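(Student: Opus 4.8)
The plan is to mirror, almost verbatim, the proof of Theorem \ref{thm:stronglygfirst}, with the role played there by ``pretopology of countable character'' now played by ``semi-metrizable pretopology'', and with Proposition \ref{prop:HausdorffI1xi} replaced by its semi-metric refinement, Corollary \ref{cor:symtosemi}. The only genuinely new ingredient is the observation that the class of semi-metrizable pretopologies is stable under finite products: if $d$ is a semi-metric on $X$ and $e$ a semi-metric on $Y$, then $\rho((x,y),(x',y')):=\max\{d(x,x'),e(y,y')\}$ is a semi-metric on $X\times Y$ whose balls satisfy $B_{\rho}((x,y),\epsilon)=B_{d}(x,\epsilon)\times B_{e}(y,\epsilon)$, so that $\V_{\tilde{\rho}}(x,y)=\V_{\tilde{d}}(x)\times\V_{\tilde{e}}(y)$ and therefore $\tilde{\rho}=\tilde{d}\times\tilde{e}$. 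Hence a product of two semi-metrizable pretopologies is again semi-metrizable. I will use this freely below, together with the fact (already invoked in the proof of Theorem \ref{thm:stronglygfirst}) that $\I_{1}$ commutes with finite products, that is, $\I_{1}(\xi\times\tau)=\I_{1}\xi\times\I_{1}\tau$, which holds because countable-character convergences are closed under finite products.

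For $(1)\then(2)$: if $\xi$ is strongly symmetrizable, fix a semi-metric $d$ with $\tilde{d}\geq\xi\geq\Epi_{\I_{1}}\tilde{d}$, and let $\tau=\tilde{e}$ be any semi-metrizable pretopology, so that $\tau=\I_{1}\tau=\S_{0}\tau$. Since $\tau=\I_{1}\tau$, the characterization (\ref{eq:Epi}) of $\Epi_{\I_{1}}$ gives $\xi\times\tau\geq\Epi_{\I_{1}}(\tilde{d}\times\tau)$. By the product lemma, $\tilde{d}\times\tau=\tilde{\rho}$ for the semi-metric $\rho=\max\{d,e\}$, and $\tilde{\rho}=\tilde{d}\times\tau\geq\xi\times\tau$ by monotonicity of products. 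Thus $\tilde{\rho}\geq\xi\times\tau\geq\Epi_{\I_{1}}\tilde{\rho}$, which is exactly the assertion that $\xi\times\tau$ is strongly symmetrizable. The implication $(2)\then(3)$ is immediate: a prime metrizable topology is a semi-metrizable pretopology, and strong symmetrizability entails symmetrizability because $\Epi_{\I_{1}}\tilde{d}\geq\T\tilde{d}$.

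For the converse $(3)\then(1)$, I would argue exactly as in Theorem \ref{thm:stronglygfirst}. Taking $\tau$ to be a singleton shows $\xi$ is symmetrizable, so, $\xi$ being Hausdorff, Corollary \ref{cor:symtosemi} yields that $\xi$ is sequential and that $\I_{1}\xi=\tilde{d}$ is a semi-metrizable pretopology. For any prime metrizable topology $\tau$ the product $\xi\times\tau$ is again Hausdorff and, by hypothesis, symmetrizable; hence it is of countable $\S_{0}$-character and in particular sequential, so $\xi\times\tau\geq\T\I_{1}(\xi\times\tau)$. Using the commutation $\I_{1}(\xi\times\tau)=\I_{1}\xi\times\tau$ (here $\I_{1}\tau=\tau$ as $\tau$ is metrizable), this reads $\xi\times\tau\geq\T(\I_{1}\xi\times\tau)$ for every prime metrizable $\tau$, which by (\ref{eq:Epi}) is precisely $\xi\geq\Epi_{\I_{1}}\I_{1}\xi$, i.e. $\xi$ is strongly sequential. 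Being also symmetrizable, $\xi$ is strongly symmetrizable by the preceding Proposition.

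The bulk of the argument is thus a purely formal transport of the proof of Theorem \ref{thm:stronglygfirst}; the one place where care is genuinely required is the product lemma for semi-metrics, since symmetrizability---unlike countable $\S_{0}$-character---is witnessed by an actual semi-metric rather than merely by a countably based vicinity filter, and one must check that the product witness $\max\{d,e\}$ (equivalently $d+e$) really induces the product pretopology. This is elementary, so I expect no serious obstacle; the only subtlety to keep honest is that the pretopology $\I_{1}\xi\times\tau$ produced in $(3)\then(1)$ is indeed semi-metrizable, which is guaranteed by the same lemma applied to $\I_{1}\xi=\tilde{d}$ and the metric $\tau$.
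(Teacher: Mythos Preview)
Your proposal is correct and matches the paper's intended argument: the paper does not spell out a proof of this theorem but simply says ``we obtain with similar arguments'' the analogue of Theorem \ref{thm:stronglygfirst} and its preceding proposition. Your write-up is exactly that analogue, and the one extra ingredient you supply---the elementary product lemma $\widetilde{\max\{d,e\}}=\tilde d\times\tilde e$ showing that semi-metrizable pretopologies are closed under finite products---is precisely the point the paper leaves implicit when passing from ``pretopology of countable character'' to ``semi-metrizable pretopology''.
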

In the diagram below, we omit ``Fréchet'', for a Fréchet space does
not need to be strongly sequential.
\begin{center}
\begin{table}[H]
\begin{centering}
\begin{tabular}{ccccc}
strongly Fréchet &  & strongly sequential &  & Sequential\tabularnewline
$\xi\geq\S_{1}\I_{1}\xi$ & $\Longrightarrow$ & $\xi\geq\Epi_{\I_{1}}\I_{1}\xi$ & $\Longrightarrow$ & $\xi\geq\T\I_{1}\xi$\tabularnewline
$\Uparrow$ &  & $\Uparrow$ &  & $\Uparrow$\tabularnewline
countably  &  & strongly countably &  & countably \tabularnewline
characterized &  &  $\S_{0}$-characterized &  & $\S_{0}$-characterized\tabularnewline
$\xi=\I_{1}\xi$ & $\Longrightarrow$ & $\exists\sigma:\sigma\geq\xi\geq\Epi_{\I_{1}}\sigma$ & $\Longrightarrow$ & $\exists\sigma:\sigma\geq\xi\geq\T\sigma$\tabularnewline
$\Uparrow$ &  & $\Uparrow$ &  & $\Uparrow$\tabularnewline
semi-metrizable &  & strongly symmetrizable &  & symmetrizable\tabularnewline
$\exists d:\xi=\tilde{d}$ & $\Longrightarrow$ & $\exists d:\tilde{d}\geq\xi\geq\Epi_{\I_{1}}\tilde{d}$ & $\Longrightarrow$ & $\exists d:\tilde{d}\geq\xi\geq\T\tilde{d}$\tabularnewline
\end{tabular}
\par\end{centering}
\bigskip{}

\caption{In this table $d$ denotes a semi-metric and $\sigma$ denotes a pretopology
of countable character}
\end{table}
\par\end{center}

\section{More product theorems}

Corollary \ref{cor:gfirst} and Theorem \ref{thm:semimet} also shed
light on results of Y. Tanaka on stability under product of $\S_{0}$-characterized
and symmetrizable spaces:
\begin{thm}
\cite{Tanakonsym}\label{thm:Tanakaprod} Let $X$ be a regular Hausdorff
topological space.
\begin{enumerate}
\item If $X$ is a locally countably compact symmetrizable space then $X\times Y$
is symmetrizable for every symmetrizable space $Y$.
\item If $X$ is locally countably compact and of countable $\S_{0}$-character,
then $X\times Y$ has countable $\S_{0}$-character for every topological
space $Y$ of countable $\S_{0}$-character.
\end{enumerate}
\end{thm}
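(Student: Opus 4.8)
The plan is to prove both parts by the same argument, handling part (1) with a semi-metric and part (2) with a pretopology of countable character. Throughout, recall that $Y$ is symmetrizable (resp. of countable $\S_{0}$-character) exactly when $Y=\T\mu$ for some semi-metrizable pretopology (resp. pretopology of countable character) $\mu$ on $|Y|$.

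First I would upgrade the hypotheses on $X$. In either case $X$ is of countable $\S_{0}$-character, hence sequential, so $X$ is a regular Hausdorff sequential locally countably compact topology and is therefore strongly sequential by Proposition \ref{prop:fromstrongmore}. Combined with Proposition \ref{prop:stronggfirst} (resp. the analogous characterization of strong symmetrizability), $X$ is strongly countably $\S_{0}$-characterized (resp. strongly symmetrizable); concretely, setting $\tilde{e}:=\I_{1}X$, which by Proposition \ref{prop:HausdorffI1xi} is a pretopology of countable character (a semi-metric pretopology in part (1), by Corollary \ref{cor:symtosemi}) with $X=\T\tilde{e}$, we have $X\geq\Epi_{\I_{1}}\tilde{e}$.

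Now fix $\mu$ with $Y=\T\mu$ and note that $\tilde{e}\times\mu$ is again a pretopology of countable character (a semi-metric pretopology in part (1), for the semi-metric $\max(e,d)$). The proof then reduces to the chain
\[
X\times Y=X\times\T\mu\overset{(\mathrm{A})}{=}\T(X\times\mu)\overset{(\mathrm{B})}{=}\T(\tilde{e}\times\mu),
\]
whose right-hand side is the topological reflection of a semi-metrizable pretopology (resp. of a pretopology of countable character), hence symmetrizable (resp. of countable $\S_{0}$-character), which is the conclusion. Equality (B) is the functorial step: since $\mu=\I_{1}\mu$, the characterization (\ref{eq:Epi}) of $\Epi_{\I_{1}}$ applied to $X\geq\Epi_{\I_{1}}\tilde{e}$ gives $X\times\mu\geq\Epi_{\I_{1}}(\tilde{e}\times\mu)\geq\T(\tilde{e}\times\mu)$, while $\tilde{e}\geq X$ gives $\tilde{e}\times\mu\geq X\times\mu$; applying $\T$ to both yields (B). This is where strong sequentiality --- and so, through Proposition \ref{prop:fromstrongmore}, regularity together with local countable compactness --- enters: it is needed precisely because $\N_{X}$ need not be countably based, so that $X\times\mu$ itself may fail to be of countable character, and (B) serves to replace the factor $X$ by its countable-character coreflection $\tilde{e}$ at the level of open sets.

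Equality (A) is the genuinely topological obstacle and uses local countable compactness a second time, as a tube argument; I expect it to be the hard part. The inequality $\T(X\times\mu)\geq X\times Y$ is immediate from $\mu\geq\T\mu=Y$, so the content is that every $(X\times\mu)$-open set $O$ is open in the product topology $X\times Y$. Given $(x_{0},y_{0})\in O$, I would use regularity and local countable compactness to pick a countably compact $X$-neighborhood $K$ of $x_{0}$ inside an $X$-open $U_{0}$ with $U_{0}\times B(y_{0},\delta_{0})\subseteq O$, and put $V:=\{y:K\times\{y\}\subseteq O\}$; then $x_{0}\in\intr K$, $y_{0}\in V$, and $\intr K\times V\subseteq O$, so it suffices to check that $V$ is $\mu$-open, i.e. $Y$-open. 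This is the tube lemma: if $K\times\{y\}\subseteq O$ but no basic $\mu$-neighborhood $V_{n}(y)$ of $y$ has $K\times V_{n}(y)\subseteq O$, choose $x_{n}\in K$ and $y_{n}\in V_{n}(y)$ with $(x_{n},y_{n})\notin O$; a cluster point $x^{*}\in K$ of $(x_{n})_{n}$ exists by countable compactness, the point $(x^{*},y)\in O$ has a basic $(X\times\mu)$-neighborhood $W\times V_{m}(y)\subseteq O$, and this neighborhood captures $(x_{n},y_{n})$ for suitable large $n$, a contradiction. The countable character of $\mu$ is exactly what allows the relevant cover to be indexed countably, so that countable --- rather than full --- compactness of $K$ suffices; recognizing that this is the point at which the two hypotheses on $X$ are just enough is the crux of keeping the argument elementary.
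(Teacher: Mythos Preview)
Your argument is correct, but it diverges from the paper's at the key step. Both proofs begin the same way: use Proposition~\ref{prop:fromstrongmore} and Proposition~\ref{prop:HausdorffI1xi} to upgrade the hypotheses, obtaining a pretopology $\sigma=\I_{1}X$ of countable character with $X=\T\sigma$ and $X\geq\Epi_{\I_{1}}\sigma$ (this is the content of Lemma~\ref{lem:loccompactrel}). From there the paper simply invokes Theorem~\ref{thm:frommech}, a black-box result from \cite{DM.products}, which under the hypothesis ``$\T\sigma$ locally countably $\Epi_{\I_{1}}\sigma$-compact'' delivers directly the inequality $\T\sigma\times\T\mu\geq\T(\sigma\times\mu)$ for every sequential $\mu$; the proof is then a one-liner.

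You instead rederive this inequality by hand. Your step~(B) extracts from~(\ref{eq:Epi}) only the weaker $X\times\mu\geq\T(\sigma\times\mu)$ (with $\mu$ rather than $\T\mu$ on the left), and you then need step~(A), the tube-lemma computation $X\times\T\mu=\T(X\times\mu)$, to close the gap. That tube argument is sound as written, and your observation that countable character of $\mu$ is precisely what lets countable compactness of $K$ suffice is the right diagnosis. The net effect is that your proof is more self-contained --- it does not appeal to the external machinery of \cite{DM.products} --- at the price of re-proving, in the special case $\Epi_{\I_{1}}\sigma=\T\sigma$, the implication $(1)\Rightarrow(2)$ of Theorem~\ref{thm:frommech}. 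The paper's route is shorter and more in the spirit of its program of reducing such results to functorial inequalities; yours makes the topological content (the tube lemma) explicit.
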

Both results are, in fact, instances of results of \cite{DM.products}
characterizing the relation between two convergences $\xi$ and $\sigma$
on the same underlying set for which 
\begin{equation}
\xi\times\T\tau\geq\T(\sigma\times\tau).\label{eq:commute}
\end{equation}
for every $\tau$ in a specified class of convergence. The full scope
of results studying (\ref{eq:commute}) is not needed here and we
invite the interested reader to consult \cite{DM.products} for an
account of the many topological applications of variants of (\ref{eq:commute}).
For our present purpose, we will only need this very particular case: 
\begin{thm}
\label{thm:frommech} \cite[Theorem 9.10]{DM.products} Let $\sigma$
be a convergence such that $\T\sigma$ is a regular Hausdorff topology.
The following are equivalent:
\begin{enumerate}
\item $\T\sigma$ is locally countably $\Epi_{\I_{1}}\sigma$-compact;
\item $\T\sigma\times\T\tau\geq\T(\sigma\times\tau)$ for every $\tau\geq\T\I_{1}\tau$;
\item $\T\sigma\times\T\tau\geq\T(\sigma\times\tau)$ for every $\tau=\I_{1}\T\tau$.
\end{enumerate}
\end{thm}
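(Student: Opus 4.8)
The plan is to run the cycle $(1)\then(2)\then(3)\then(1)$ around the intermediate convergence $\theta:=\Epi_{\I_{1}}\sigma$, which by the inequalities recalled before (\ref{eq:Epi}) satisfies $\T\sigma\leq\theta\leq\sigma$ and hence $\T\theta=\T\sigma=:p$. The first step I would record is the reduction furnished by (\ref{eq:Epi}) itself: applying its middle characterization with $\theta=\Epi_{\I_{1}}\sigma$ (which trivially satisfies $\theta\geq\Epi_{\I_{1}}\sigma$) yields $\Epi_{\I_{1}}\sigma\times\tau\geq\Epi_{\I_{1}}(\sigma\times\tau)\geq\T(\sigma\times\tau)$ for every $\tau=\I_{1}\tau$, while the opposite inequality $\T(\theta\times\tau)\leq\T(\sigma\times\tau)$ is automatic from $\theta\leq\sigma$. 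Thus
\[
\T(\sigma\times\tau)=\T(\theta\times\tau)\qquad\text{whenever }\tau=\I_{1}\tau,
\]
so that against countably based factors the product only sees $\sigma$ through the intermediate convergence $\theta$ with $\T\theta=p$. This turns the theorem into an instance of the classical principle that $\T$ commutes with $\theta\times(-)$ exactly when $p$ is suitably locally compact relative to $\theta$.

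The implication $(2)\then(3)$ needs no work: if $\tau=\I_{1}\T\tau$ then, since $\I_{1}\eta\leq\eta$ and $\T\eta\leq\eta$ for every $\eta$, one gets $\tau=\T\tau=\I_{1}\tau$, a first-countable topology, which is in particular sequential, $\tau\geq\T\I_{1}\tau$; thus (3) merely quantifies over a subclass of the convergences appearing in (2).

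For $(1)\then(2)$ I would deduce the commutation $\T\sigma\times\T\tau\geq\T(\sigma\times\tau)$ from local countable $\theta$-compactness of $p$ by a tube-lemma argument. Given a sequential $\tau$ and a filter $\W$ on $|\sigma|\times|\tau|$ with $(x,y)\in\lim_{\T(\sigma\times\tau)}\W$, I must produce $x\in\lim_{p}p_{1}\W$ and $y\in\lim_{\T\tau}p_{2}\W$, where $p_{1},p_{2}$ are the projections. I would fix a $p$-neighborhood $U$ of $x$ that is countably $\theta$-compact, use that $\tau$ is sequential so that convergence in $\T(\sigma\times\tau)$ is witnessed by filters whose $\tau$-component is countably based---bringing the reduction identity into play with $\theta$ as the operative factor---and then, for each basic neighborhood of $y$, trap the corresponding section filter inside $U$ and extract a $\theta$-adherence point by countable $\theta$-compactness; regularity and Hausdorffness of $p$ pin this point to $x$ and propagate the inclusion back to the product.

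The reverse direction $(3)\then(1)$ I would establish by contraposition, which is where the real work lies. Assuming $p$ fails to be locally countably $\theta$-compact at some $x$, every $p$-neighborhood $V$ of $x$ carries a countably based filter meshing with $V$ whose $\theta$-adherence avoids a prescribed $p$-closed set; distilling a single decreasing sequence of such data I would assemble a prime metrizable topology $\tau$---a convergent sequence with limit $\omega$---together with a filter $\W$ on $|\sigma|\times|\tau|$ engineered so that $(x,\omega)\in\lim_{\T(\sigma\times\tau)}\W$ yet $(x,\omega)\notin\lim_{p\times\T\tau}\W$, contradicting (3). Regularity of $p$ is used to separate the escaping adherence from the neighborhood filter, and Hausdorffness to ensure the sequence genuinely records the failure of compactness. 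I expect this construction, together with the precise matching of the three quantifiers hidden in ``locally countably $\theta$-compact''---local, countable, and $\theta$-relative---to the data read off from the product, to be the main obstacle; it is exactly here that the regular Hausdorff hypothesis and the identity $\T(\sigma\times\tau)=\T(\theta\times\tau)$ must be deployed with care.
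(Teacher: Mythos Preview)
The paper does not prove Theorem~\ref{thm:frommech}; it is quoted from \cite[Theorem~9.10]{DM.products} and invoked as a black box in the proof of Theorem~\ref{thm:Tanakaprod}. So there is no in-paper argument to compare your proposal against.

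On the substance of your outline: the reduction $\T(\sigma\times\tau)=\T(\theta\times\tau)$ for $\tau=\I_{1}\tau$ via (\ref{eq:Epi}) is the right first move, and the overall shape of $(1)\Rightarrow(2)$ and $(3)\Rightarrow(1)$ is the standard one from \cite{DM.products}. However, your justification of $(2)\Rightarrow(3)$ contains a genuine slip: you write ``since $\I_{1}\eta\leq\eta$'', but $\I_{1}$ is a \emph{coreflector} and hence expansive ($\I_{1}\eta\geq\eta$), not contractive. Consequently your conclusion that $\tau=\I_{1}\T\tau$ forces $\tau=\T\tau$---that $\tau$ is a first-countable \emph{topology}---is false in general: if $\T\tau$ is sequential but not first-countable then $\I_{1}\T\tau\neq\T\tau$. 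The implication $(2)\Rightarrow(3)$ is still immediate, but for the correct reason: from $\tau=\I_{1}\T\tau$ and idempotence of $\I_{1}$ one gets $\I_{1}\tau=\I_{1}\I_{1}\T\tau=\I_{1}\T\tau=\tau$, whence $\T\I_{1}\tau=\T\tau\leq\tau$, so $\tau$ is sequential and the class in (3) is a subclass of that in (2).

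Your sketches for $(1)\Rightarrow(2)$ and $(3)\Rightarrow(1)$ are plausible in spirit but too loose to stand as proofs. In particular, the claim ``$\tau$ sequential so convergence in $\T(\sigma\times\tau)$ is witnessed by filters whose $\tau$-component is countably based'' is not a direct consequence of sequentiality of $\tau$ alone and would need to be made precise through the contour/meshing machinery of \cite{DM.products}; similarly, the contrapositive construction in $(3)\Rightarrow(1)$ requires a careful choice of the prime test space and an explicit filter on the product, which you only gesture at.
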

We will need the following technical lemma:
\begin{lem}
\label{lem:loccompactrel} Let $\xi$ be a regular Hausdorff topology
of countable $\S_{0}$-character, so that $\xi=\T\sigma$ for some
pretopology $\sigma$ of countable character. If $\xi$ is locally
countably compact, then $\xi=\Epi_{\I_{1}}\sigma$, and thus $\xi$
is locally countably $\Epi_{\I_{1}}\sigma$-compact. 
\end{lem}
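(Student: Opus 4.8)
The plan is to deduce the identity $\xi=\Epi_{\I_1}\sigma$ from Theorem \ref{thm:frommech} and the product characterization (\ref{eq:Epi}), after reducing everything to a single compactness statement. Since $\xi=\T\sigma$ and, for every convergence, $\sigma\geq\Epi_{\I_1}\sigma\geq\T\sigma$, I already have $\Epi_{\I_1}\sigma\geq\xi$, so the whole point is the reverse inequality $\xi\geq\Epi_{\I_1}\sigma$. By (\ref{eq:Epi}) this is equivalent to $\xi\times\tau\geq\T(\sigma\times\tau)$ for every prime metrizable topology $\tau$; as such a $\tau$ is first countable and topological, it satisfies $\tau=\I_1\T\tau$, so this is exactly instance (3) of Theorem \ref{thm:frommech}, whose standing hypothesis (that $\T\sigma=\xi$ be regular Hausdorff) holds. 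Thus everything reduces to verifying condition (1) of that theorem, namely that $\xi$ is locally countably $\Epi_{\I_1}\sigma$-compact, which is also the lemma's closing assertion; once it is in place, instance (3) supplies the product inequalities and hence, through (\ref{eq:Epi}), the missing inequality $\xi\geq\Epi_{\I_1}\sigma$, and therefore $\xi=\Epi_{\I_1}\sigma$.

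So the substance is to upgrade the hypothesis ``locally countably compact'' (that is, locally countably $\xi$-compact) to ``locally countably $\Epi_{\I_1}\sigma$-compact.'' First I would invoke regularity of $\xi$ to choose, around each point, a $\xi$-closed countably $\xi$-compact neighborhood $K$ (a closed neighborhood contained in a countably compact one, which is again countably compact). For any countably based filter $\H$ with $K\in\H$ the set $\adh_{\xi}\H=\bigcap_{H\in\H}\cl_{\xi}H$ is then a nonempty compact subset of $K$. Because $\Epi_{\I_1}\sigma\geq\xi$, one always has $\adh_{\Epi_{\I_1}\sigma}\H\subseteq\adh_{\xi}\H$, so the countable $\Epi_{\I_1}\sigma$-compactness of $K$ amounts to showing that this inclusion never empties the adherence: every countably based filter trapped in $K$ must keep a nonempty adherence after passing to the finer convergence $\Epi_{\I_1}\sigma$.

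The main obstacle is precisely this transfer of nonempty adherence from $\xi$ to $\Epi_{\I_1}\sigma$ for countably based filters. The reason to expect it is structural: $\Epi_{\I_1}\sigma$ is by definition the coarsening of $\sigma$ that is indistinguishable from $\T\sigma=\xi$ on products with spaces of countable character, and the countably based filters are exactly the $\I_1$-filters to which the modifier $\Epi_{\I_1}$ is blind. Accordingly, I would isolate as the key step the coincidence $\adh_{\Epi_{\I_1}\sigma}\H=\adh_{\xi}\H$ for every countably based $\H$, and prove it by testing the defining identity $\T(\sigma\times\tau)=\T(\Epi_{\I_1}\sigma\times\tau)$ against a prime metrizable $\tau$ built from a countable decreasing base of $\H$. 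This is where I expect the real work (and the delicate interplay of regularity, the countable character of $\sigma$, and compactness of $\adh_{\xi}\H$) to sit; it is the step most likely to need care, since the inclusion to be reversed is genuinely strict for general filters and only the $\I_1$-filters should be exempt.

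Finally I would record the deduction explicitly. Feeding the adherence coincidence back into the second paragraph shows that every $\xi$-closed countably $\xi$-compact $K$ is countably $\Epi_{\I_1}\sigma$-compact, so $\xi$ is locally countably $\Epi_{\I_1}\sigma$-compact, i.e. condition (1) of Theorem \ref{thm:frommech} holds. Then condition (3) gives $\xi\times\tau\geq\T(\sigma\times\tau)$ for all prime metrizable $\tau$, whence $\xi\geq\Epi_{\I_1}\sigma$ by (\ref{eq:Epi}) and therefore $\xi=\Epi_{\I_1}\sigma$; the closing clause of the lemma is then immediate, since $\xi$ is locally countably $\xi$-compact by hypothesis and $\xi=\Epi_{\I_1}\sigma$.
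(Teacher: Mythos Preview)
Your logical scaffolding is sound up to the point you yourself flag as the obstacle: the ``adherence coincidence'' $\adh_{\Epi_{\I_1}\sigma}\H=\adh_{\xi}\H$ for countably based $\H$. You do not prove it, and you should not expect it to fall out easily from the defining identity $\T(\sigma\times\tau)=\T(\Epi_{\I_1}\sigma\times\tau)$ alone; that identity compares \emph{open sets} in products, and extracting a pointwise adherence statement from it is exactly the kind of transfer that requires the compactness and regularity hypotheses you mention --- in other words, it is essentially the content of Proposition~\ref{prop:fromstrongmore}, which you would be re-deriving from scratch. As stated (``for every countably based $\H$''), the coincidence is not even obviously true without restricting to filters meeting a countably compact set, so the step needs both a correction of scope and an actual argument.

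The paper bypasses all of this. Since $\xi=\T\sigma$ with $\sigma=\I_1\sigma$, one has $\xi\geq\T\I_1\xi$, so $\xi$ is sequential; being also regular and locally countably compact, Proposition~\ref{prop:fromstrongmore} gives that $\xi$ is strongly sequential, i.e., $\xi\geq\Epi_{\I_1}\I_1\xi$. By Proposition~\ref{prop:stronggfirst} the Hausdorff $\xi$ is then strongly countably $\S_0$-characterized, and Proposition~\ref{prop:HausdorffI1xi} forces the witnessing pretopology to be $\I_1\xi=\sigma$, so $\xi\geq\Epi_{\I_1}\sigma\geq\T\sigma=\xi$. This yields $\xi=\Epi_{\I_1}\sigma$ directly, and local countable $\Epi_{\I_1}\sigma$-compactness follows trivially. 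Your route goes the other way around --- first the compactness, then Theorem~\ref{thm:frommech}, then the identity --- and thereby front-loads the hard analytic work that the paper outsources to Proposition~\ref{prop:fromstrongmore}. If you want to rescue your argument, the cleanest fix is simply to invoke Proposition~\ref{prop:fromstrongmore} to get $\xi\geq\Epi_{\I_1}\I_1\xi=\Epi_{\I_1}\sigma$ (using Proposition~\ref{prop:HausdorffI1xi} for $\I_1\xi=\sigma$); at that point Theorem~\ref{thm:frommech} and the detour through product inequalities become unnecessary.
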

\begin{proof}
Because 
\[
\xi=\T\I_{1}\sigma\geq\T\I_{1}\T\sigma=\T\I_{1}\xi,
\]
 $\xi$ is in particular sequential and regular. If $\xi$ is also
locally countably compact, then by Proposition \ref{prop:fromstrongmore}
it is also strongly sequential so that in view of Proposition \ref{prop:stronggfirst},
$\xi$ is strongly countably $\S_{0}$-characterized, and thus $\xi=\Epi_{\I_{1}}\alpha$
for some pretopology $\alpha$ of countable character. By Proposition
\ref{prop:HausdorffI1xi}, $\alpha=\I_{1}\xi=\sigma$, which concludes
the proof.
\end{proof}
\begin{proof}[Proof of Theorem \ref{thm:Tanakaprod} ]
We prove (2) and (1) follows the same argument replacing the role
of $\sigma$ and $\tau$ by pretopologies of the form $\tilde{d}$
for semi-metrics.

Let $\xi=\T\sigma$ and $\theta=\T\tau$ for pretopologies $\sigma$
and $\tau$ of countable character. In view of Lemma \ref{lem:loccompactrel},
$\xi$ is locally countably $\Epi_{\I_{1}}\sigma$-compact. Therefore
Theorem \ref{thm:frommech} applies to the effect that 
\[
\xi\times\theta=\T\sigma\times\T\tau\geq\T(\sigma\times\tau)
\]
and $\sigma\times\tau$ is a pretopology of countable character finer
than $\xi\times\theta$. Hence $\xi\times\theta$ has countable $\S_{0}$-character. 
\end{proof}
In fact, the converse of each item in Theorem \ref{thm:Tanakaprod}
is also true \cite{Tanakaprod}, as could be obtained from a slight
variation of Theorem \ref{thm:frommech} in which the inequality only
need to be tested for pretopologies of countable character. As this
was not shown or formulated this way in \cite{DM.products}, we omit
the converse.

\appendix

\appendix

\section{Convergence spaces, notation and conventions}

\subsection{Set-theoretic conventions}

If $X$ is a set, we denote by $2^{X}$ its powerset, by $[X]^{<\infty}$
the set of finite subsets of $X$ and by $[X]^{\omega}$ the set of
countable subsets of $X$. If $\A\subset2^{X}$, we write 
\begin{eqnarray*}
\A^{\uparrow} & := & \left\{ B\subset X:\exists A\in\A,A\subset B\right\} \\
\A^{\cap} & := & \left\{ \bigcap_{S\in\F}S:\F\in[\A]^{<\infty}\right\} \\
\A^{\#} & := & \left\{ B\subset X:\forall A\in\A,A\cap B\neq\emptyset\right\} .
\end{eqnarray*}
A family $\F$ of \emph{non-empty }subsets of $X$ is called a \emph{filter
}if $\F=\F^{\cap}=\F^{\uparrow}$. We denote by $\mathbb{F}X$ the
set of filters on $X$. Note that $2^{X}$ is the only family $\A$
satisfying $\A=\A^{\uparrow}=\A^{\cap}$ that has an empty element.
Thus we sometimes call $\{\emptyset\}^{\uparrow}=2^{X}$ the \emph{degenerate
filter on }$X$. The set $\mathbb{F}X$ is ordered by $\F\leq\G$
if for every $F\in\F$ there is $G\in\G$ with $G\subset F$. Maximal
elements of $\mathbb{F}X$ are called \emph{ultrafilters }and $\F\in\mathbb{F}X$
is an ultrafilter if and only if $\F=\F^{\#}$. We denote by $\mathbb{U}X$
the set of ultrafilters on $X$.

A sequence $\{x_{n}\}_{n=1}^{\infty}$ on a set $X$ induces a filter
\[
(x_{n})_{n}:=\left\{ \left\{ x_{n}:n\geq k\right\} :k\in\omega\right\} ^{\uparrow}.
\]
A filter that is induced by some sequence is called \emph{sequential
filter}. We denote by $\mathbb{E}X$ the set of sequential filters
on $X$. A filter $\F\in\mathbb{F}X$ is called \emph{Fréchet} if
\begin{equation}
\F=\bigwedge_{\mathcal{E}\in\mathbb{E}X,\F\leq\mathcal{E}}\mathcal{E}.\label{eq:FrechetFilter}
\end{equation}
Of course, every \emph{countably based filter}, that is, a filter
with a countable filter-base, is in particular a Fréchet filter.

\subsection{Convergence}

A \emph{convergence }$\xi$ on a set $X$ is a relation between $\mathbb{F}X$
and $X$, denoted 
\[
x\in\lm_{\xi}\F
\]
whenever $(x,\F)\in\xi$ (and we then say that $\F$ \emph{converges
to }$x$ \emph{for }$\xi$), satisfying the following two conditions:
\[
\F\leq\G\then\lm_{\xi}\F\subset\lm_{\xi}\G
\]
\[
x\in\lm_{\xi}\{x\}^{\uparrow},
\]
for every filters $\F$ and $\G$ on $X$, and every $x\in X$.

The pair $(X,\xi)$ is then called a \emph{convergence space}. We
denote by $\lim_{\xi}^{-}(x)$ the set of filters that converge to
$x$ for $\xi$.

Every topology can be seen as a convergence. Indeed, if $\tau$ is
a topology and $\N_{\tau}(x)$ denotes the neighborhood filter of
$x$ for $\tau$, then 
\[
x\in\lm_{\tau}\F\iff\F\geq\N_{\tau}(x)
\]
defines a convergence that completely characterizes $\tau$. Hence,
we do not distinguish between a topology $\tau$ and the convergence
it induces.A convergence is called \emph{Hausdorff }if the cardinality
of $\lim\F$ is at most one, for every filter $\F$. Of course, a
topology is Hausdorff in the usual topological sense if and only if
it is in the convergence sense. A point $x$ of a convergence space
$(X,\xi)$ is \emph{isolated }if $\lm_{\xi}^{-}(x)=\{\{x\}^{\uparrow}\}$.
A \emph{prime }convergence is a convergence with at most one non-isolated
point.

Given two convergences $\xi$ and $\theta$ on the same set $X$,
we say that $\xi$ is \emph{finer than $\theta$ }or that $\theta$
is \emph{coarser than }$\xi$, in symbols $\xi\geq\theta$, if $\lim_{\xi}\F\subset\lim_{\theta}\F$
for every $\F\in\mathbb{F}X$. With this order, the set $\C(X)$ of
convergences on $X$ is a complete lattice whose greatest element
is the discrete topology and least element is the antidiscrete topology,
and for which, given $\Xi\subset\C(X)$,
\[
\lm_{\bigvee\Xi}\F=\bigcap_{\xi\in\Xi}\lm_{\xi}\F\text{ and }\lm_{\bigwedge\Xi}\F=\bigcup_{\xi\in\Xi}\lm_{\xi}\F.
\]

A \emph{base }for a convergence space $(X,\xi)$ is a family $\B$
of subsets of $X$ such that for every $x\in X$ and every $\F$ with
$x\in\lim_{\xi}\F$, there is a filter $\G$ with a filter-base composed
of elements of $\B$ with $x\in\lim_{\xi}\G$ and $\F\leq\G$. If
this property is satisfied only for a specific $x$, then $\B$ is
a \emph{local base at $x$. }

\subsection{Continuity, initial and final constructions}

A map $f$ between two convergence spaces $(X,\xi)$ and $(Y,\sigma)$
is continuous if for every $\F\in\mathbb{F}X$ and $x\in X$,
\[
x\in\lm_{\xi}\F\then f(x)\in\lm_{\sigma}f[\F],
\]
where 
\[
f[\F]:=\{B\subset Y:f^{-1}(B)\in\F\}=\{f(F):F\in\F\}^{\uparrow}.
\]

Consistently with \cite{DM.book}, we denote by $|\xi|$ the underlying
set of a convergence $\xi$, and, if $(X,\xi)$ and $(Y,\sigma)$
are two convergence spaces we often write $f:|\xi|\to|\sigma|$ instead
of $f:X\to Y$ even though one may see it as improper since many different
convergences have the same underlying set. This allows to talk about
the continuity of $f:|\xi|\to|\sigma|$ without having to repeat for
what structure. 

Given a map $f:|\xi|\to Y$, there is the finest convergence $f\xi$
on $Y$ making $f$ continuous (from $\xi$), and given $f:X\to|\sigma|$,
there is the coarsest convergence $f^{-}\sigma$ on $X$ making $f$
continuous (to $\sigma$). The convergences $f\xi$ and $f^{-}\sigma$
are called \emph{final convergence for $f$ and $\xi$ }and \emph{initial
convergence for $f$ and $\sigma$ }respectively. Note that 
\begin{equation}
f:|\xi|\to|\sigma|\text{ is continuous }\iff\xi\geq f^{-}\sigma\iff f\xi\geq\sigma.\label{eq:continuity}
\end{equation}

If $A\subset|\xi|$, the \emph{induced convergence by $\xi$ on $A$,
}or \emph{subspace convergence},\emph{ }is $i^{-}\xi$, where $i:A\to|\xi|$
is the inclusion map. If $\xi$ and $\tau$ are two convergences,
the \emph{product convergence $\xi\times\tau$ }on $|\xi|\times|\tau|$
is the coarsest convergence on $|\xi|\times|\tau|$ making both projections
continuous, that is,
\[
\xi\times\tau:=p_{\xi}^{-}\xi\vee p_{\tau}^{-}\tau,
\]
where $p_{\xi}:|\xi|\times|\tau|\to|\xi|$ and $p_{\tau}:|\xi|\times|\tau|\to|\tau|$
are the projections defined by $p_{\xi}(x,y)=x$ and $p_{\tau}(x,y)=y$
respectively.

\subsection{Topologies and pretopologies}

In fact, the category $\mathbf{Top}$ of topological spaces and continuous
maps is a reflective subcategory of $\mathbf{Conv}$ and the corresponding
reflector $\T$, called \emph{topologizer}, associates to each convergence
$\xi$ on $X$ its \emph{topological modification $\T\xi$, }which
is the finest topology on $X$ among those coarser than $\xi$ (in
$\mathbf{Conv}$). Concretely, $\T\xi$ is the topology whose closed
sets are the subsets of $|\xi|$ that are $\xi$-\emph{closed, }that
is, subsets $C$ satisfying
\[
C\in\F^{\#}\then\lm_{\xi}\F\subset C.
\]
 A subset $O$ of $|\xi|$ is $\xi$-open if its complement is closed,
equivalently if 
\[
\lm_{\xi}\F\cap O\neq\emptyset\then O\in\F.
\]

Because $\mathbf{Top}$ is a (full) reflective subcategory of $\mathbf{Conv}$,
$\mathbf{Top}$ is closed under initial constructions so that a subspace
of a topological convergence space is topological and a product of
topological convergence spaces is topological. However, the convergence
induced by $\T\xi$ on a subset $A$ and $\T(i^{-}\xi)$ do not need
to coincide, and similarly, the topological modification of a product
generally does not coincide with the product of the topological modifications.

Given a convergence $\xi$ and $x\in|\xi|$, the filter 
\[
\V_{\xi}(x):=\bigwedge_{\F\in\lm_{\xi}^{-}(x)}\F
\]
is called the \emph{vicinity filter of} $x$ for $\xi$. In general,
$\V_{\xi}(x)$ does not need to converge to $x$ for $\xi$. If $x\in\lim_{\xi}\V_{\xi}(x)$
for all $x\in|\xi|$, we say that $\xi$ is a \emph{pretopology }or
is \emph{pretopological}. Note that a local base at $x$ of a pretopology
is necessarily a filter-base of $\V_{\xi}(x)$.

The category $\mathbf{PrTop}$ of pretopological spaces and continuous
maps is a (full) reflective subcategory of $\mathbf{Conv}$. The corresponding
reflector $\S_{0}$, called \emph{pretopologizer}, associates to each
convergence $\xi$ its \emph{pretopological modification }$\S_{0}\xi$,
which is the finest among the pretopologies coarser than $\xi$. Explicitly,
$x\in\lim_{\S_{0}\xi}\F$ if $\F\geq\V_{\xi}(x)$ so that $\V_{\xi}(x)=\V_{\S_{0}\xi}(x)$.
Topologies are in particular pretopological, so that $\S_{0}\geq\T$.
Moreover, 
\[
O\text{ is }\xi\text{-open }\iff O\in\bigcap_{x\in O}\V_{\xi}(x),
\]
and $\xi$ is topological if and only if $\V_{\xi}(x)$ has a filter-base
composed of open sets, in which case $\V_{\xi}(x)=\N_{\xi}(x)$. We
distinguish between the \emph{inherence 
\begin{equation}
\inh_{\xi}A=\{x\in X:A\in\V_{\xi}(x)\}\label{eq:inh}
\end{equation}
of $A\subset|\xi|$ }and its \emph{interior 
\[
\intr_{\xi}A=\{x\in X:A\in\N_{\xi}(x)\}.
\]
}

In contrast to $\T$, the reflector $\S_{0}$ does commute with initial
convergence (so that the pretopological modification of an induced
convergence is the convergence induced by the pretopological modification),
but not with suprema, hence not with products. 

\subsection{Sequential, Fréchet, and strongly Fréchet convergences}

To a convergence $\xi$, we associate its \emph{sequentially based
modification }$\Seq\xi$ defined by 
\[
\lm_{\Seq\xi}\F=\bigcup_{\mathcal{E}\in\mathbb{E}|\xi|,\mathcal{E}\leq\F}\lm_{\xi}\mathcal{E}.
\]

A useful immediate consequence of \cite[Corollary 10]{DM.seq} (that
can be traced all the way back to \cite{KAN}, but \cite{DM.seq}
gives a formulation more consistent with our terminology) is:
\begin{prop}
\label{prop:coincideSeq} If $\sigma$ is a Hausdorff pretopology,
then 
\[
\Seq\sigma=\Seq\T\sigma.
\]
\end{prop}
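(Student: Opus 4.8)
The plan is to prove the two inequalities separately: $\Seq\sigma\geq\Seq\T\sigma$ is routine, while the reverse inequality reduces, after a short manipulation, to a classical fact about sequences in which the Hausdorff hypothesis does all the work.

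First I would dispatch $\Seq\sigma\geq\Seq\T\sigma$. Since $\T\sigma$ is coarser than $\sigma$, one has $\lm_{\sigma}\mathcal{E}\subseteq\lm_{\T\sigma}\mathcal{E}$ for every $\mathcal{E}\in\mathbb{E}|\sigma|$, and taking the union over all sequential filters $\mathcal{E}\leq\F$ in the defining formula for $\Seq$ gives $\lm_{\Seq\sigma}\F\subseteq\lm_{\Seq\T\sigma}\F$ for every $\F$; that is, $\Seq$ is monotone and $\Seq\sigma\geq\Seq\T\sigma$.

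For the reverse inequality I would first reduce the problem to sequential filters. From the defining formula, for any convergence $\eta$ and any sequential filter $\mathcal{E}$ one has $\lm_{\Seq\eta}\mathcal{E}=\lm_{\eta}\mathcal{E}$, since the largest index, $\mathcal{E}$ itself, dominates the union $\bigcup_{\mathcal{E}'\leq\mathcal{E}}\lm_{\eta}\mathcal{E}'$; consequently $\lm_{\Seq\eta}\F=\bigcup_{\mathcal{E}\in\mathbb{E}|\eta|,\,\mathcal{E}\leq\F}\lm_{\Seq\eta}\mathcal{E}$ for every filter $\F$, so that $\Seq\eta$ is completely determined by its restriction to sequential filters. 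Applying this to $\eta=\sigma$ and $\eta=\T\sigma$, the equality $\Seq\sigma=\Seq\T\sigma$ becomes \emph{equivalent} to the assertion that $\sigma$ and $\T\sigma$ admit exactly the same convergent sequences, i.e. that $x\in\lm_{\T\sigma}\mathcal{E}\then x\in\lm_{\sigma}\mathcal{E}$ for every $\mathcal{E}\in\mathbb{E}|\sigma|$ (the converse being immediate from $\sigma\geq\T\sigma$).

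It then remains to prove this last \emph{key fact}: in a Hausdorff pretopology, a sequence converging in the topological modification already converges for the pretopology. This is the crux and the only place the Hausdorff hypothesis enters. Concretely, if $(x_{n})_{n}\to x$ in $\T\sigma$ but $(x_{n})_{n}\not\to x$ in $\sigma$, then some vicinity $V\in\V_{\sigma}(x)$ is missed infinitely often, so that $x\in\cl_{\T\sigma}(|\sigma|\setminus V)$ while $x\notin\adh_{\sigma}(|\sigma|\setminus V)$; thus $x$ is reached from $\{x_{n}\}$ only after iterating $\adh_{\sigma}$ at least twice, which produces an array of $\sigma$-convergent sequences whose $\sigma$-limits themselves approach $x$. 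Hausdorffness, by forcing sequential limits to be unique, is exactly what prevents this configuration and collapses the iterated adherence, yielding the contradiction. I expect this diagonal argument to be the main obstacle; rather than reproduce it, I would invoke it directly, since it is precisely the content of \cite[Corollary 10]{DM.seq} (going back to \cite{KAN}), from which the proposition follows at once.
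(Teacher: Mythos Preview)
Your proposal is correct and matches the paper's approach exactly: the paper does not give an independent proof of this proposition but simply records it as an immediate consequence of \cite[Corollary 10]{DM.seq} (traced back to \cite{KAN}), which is precisely the reference you invoke for the key fact after your reduction. Your added reduction to the equality of sequential limits is sound and makes the dependence on the citation transparent, but it does not diverge from the paper's route.
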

In the same vein, letting $\mathbb{F}_{1}X$ denote the set of filters
with a countable filter-base, we can associate to each convergence
$\xi$ its\emph{ modification of countable character }$\I_{1}\xi$
defined by 
\[
\lm_{\I_{1}\xi}\F=\bigcup_{\H\in\mathbb{F}_{1}|\xi|,\H\leq\F}\lm_{\xi}\H.
\]

Both $\Seq$ and $\I_{1}$ are concrete coreflectors from $\mathbf{Conv}$
to the full categories of sequentially based and convergences of countable
character respectively.

Recall that a topological space is \emph{sequential }if every sequentially
closed subset (that is, subset that contains the limit point of every
sequence on it) is closed. It is easily seen that a subset of a topology
$\xi$ is sequentially closed if and only if it is $\Seq\xi$-closed,
so that, $\xi$ is sequential if and only if $\xi=\T\Seq\xi$. As
$\sigma\leq\Seq\sigma$ for every convergence $\sigma$, the inequality
$\xi\leq\T\Seq\xi$ is true for every topology $\xi$. Hence if $\xi=\T\xi$
then
\[
\xi\text{ is sequential}\iff\xi\geq\T\Seq\xi,
\]
 and it turns out that 
\begin{equation}
\xi\geq\T\Seq\xi\iff\xi\geq\T\I_{1}\xi\label{eq:sequential}
\end{equation}
and we take one or the other of these inequalities as a definition
of a \emph{sequential convergence.}

A topological space $X$ is \emph{Fréchet }if for every $x\in X$
and $A\subset X$, if $x\in\cl A$ then there is a sequence on $A$
converging to $x$. It is easily seen (e.g., \cite{KR.decompositionseries}\cite{quest2})
that if $\xi$ is a topology then
\begin{equation}
\xi\text{ Fréchet }\iff\xi\geq\S_{0}\Seq\xi\iff\xi\geq\S_{0}\I_{1}\xi,\label{eq:Frechet}
\end{equation}
and we take either one of these inequalities as a definition of a
\emph{Fréchet convergence. }Note that a pretopology is Fréchet if
and only if each vicinity filter is a Fréchet filter in the sense
of (\ref{eq:FrechetFilter}).

The \emph{paratopologizer} $\S_{1}$ is defined by 
\[
\lm_{\S_{1}\xi}\F=\bigcap_{\mathbb{F}_{1}\ni\H\#\F}\adh_{\xi}\H
\]
 and defines a concrete reflector. A fixed point of $\S_{1}$ is called
a \emph{paratopology.}

A topological space $X$ is \emph{strongly Fréchet }if whenever $x\in\adh\H$
for a countably based filter $\H$ on $X$, there is a sequential
filter $(x_{n})_{n}$ with $(x_{n})_{n}\geq\H$ and $x\in\lim(x_{n})_{n}$.
It is easily seen (e.g., \cite{quest2}) that if $\xi$ is a topology
then
\begin{equation}
\xi\text{ strongly Fréchet }\iff\xi\geq\S_{1}\Seq\xi\iff\xi\geq\S_{1}\I_{1}\xi,\label{eq:Frechet-1}
\end{equation}
and we take either one of these inequalities as a definition of a
\emph{strongly Fréchet convergence.}

\subsection{Properties of concrete functors}

Several other topological properties can be characterized with the
aid of a functorial inequality; see \cite{quest2,DM.book}. All functors
considered here (in particular $\T$, $\S_{0}$, $\S_{1}$, $\Seq$
and $\I_{1}$ ) are concrete endofunctors of $\mathbf{Conv}$, and
as such each satisfy the following properties of a modifier $F$ acting
on convergence spaces (for all $\xi$,$\theta$ and $f$):
\[
|F\xi|=F|\xi|
\]
\[
\xi\leq\theta\then F\xi\leq F\theta
\]
\[
F(f^{-}\theta)\geq f^{-}(F\theta)
\]

All five functors $\T$, $\S_{0}$, $\S_{1}$, $\Seq$ and $\I_{1}$
are also idempotent, that is, satisfy $F(F\xi)=F\xi$ for all $\xi$.
The reflectors $\T$, $\S_{0}$ and $\S_{1}$ are additionally contractive
($F\xi\leq\xi$ for all $\xi$) and the coreflectors $\Seq$ and $\I_{1}$
are additionally expansive ($F\xi\geq\xi$ for all $\xi$).

\subsection{Compactness and local compactness}

A subset $K$ of a convergence space is \emph{$\xi$-compact }if $\lim_{\xi}\U\cap K\neq\emptyset$
for every ultrafilter $\U$ on $K$, and \emph{countably compact }if
every countably based filter $\H$ with $K\in\H^{\#}$, there is an
ultrafilter $\U\geq\H$ with $\lim_{\xi}\U\cap K\neq\emptyset$. Given
two convergences $\xi$ and $\sigma$ on the same set, we say that
$\xi$ is \emph{locally (countably) $\sigma$-compact }if every $\xi$-convergent
filter has a (countably) $\sigma$-compact element.

\bibliographystyle{plain}
%\bibliography{biblio2017}

\begin{thebibliography}{10}

\bibitem{arh.mapppings}
A.~V. Arhangel'skii.
\newblock Mappings and spaces.
\newblock {\em Russian Math. Surveys}, {\bf 21}:115--162, 1966.

\bibitem{quest2}
S.~Dolecki.
\newblock Convergence-theoretic methods in quotient quest.
\newblock {\em Topology Appl.}, {\bf 73}:1--21, 1996.

\bibitem{DG}
S.~Dolecki and G.~H. Greco.
\newblock Topologically maximal pretopologies.
\newblock {\em Studia Math.}, {\bf 77}:265--281, 1984.

\bibitem{DM.seq}
S.~Dolecki, F.~Jordan, and F.~Mynard.
\newblock Reflective classes of sequentially based convergence spaces,
  sequential continuity and sequence-rich filters.
\newblock {\em Topology Proceedings}, 31(2):457--479, 2007.

\bibitem{coreflrevisited}
S.~Dolecki and F.~Mynard.
\newblock Productively sequential spaces.
\newblock {\em to appear in Math. Slovaca}.

\bibitem{DM.products}
S.~Dolecki and F.~Mynard.
\newblock Convergence-theoretic mechanisms behind product theorems.
\newblock {\em Topology and its Applications}, {\bf 104}:67--99, 2000.

\bibitem{DM.book}
S.~Dolecki and F.~Mynard.
\newblock {\em Convergence Foundations of Topology}.
\newblock World Scientific, 2016.

\bibitem{dolecki1998topologically}
S.~Dolecki and M.~Pillot.
\newblock Topologically maximal convergences, accessibility, and covering maps.
\newblock {\em Mathematica Bohemica}, 123(4):371--384, 1998.

\bibitem{Gruenhage84}
G.~Gruenhage.
\newblock {\em Generalized Metric Spaces}, volume Handbook of Set-Theoretic
  Topology, pages 423--501.
\newblock Elsevier, {K}. {K}unen and {J}. {E}. {V}aughan edition, 1984.

\bibitem{harley1976symmetrizable}
P.~W. Harley~III and R.M. Stephenson~Jr.
\newblock Symmetrizable and related spaces.
\newblock {\em Transactions of the American Mathematical Society}, pages
  89--111, 1976.

\bibitem{heath1962arc}
R.~Heath.
\newblock Arc-wise connectedness in semi-metric spaces.
\newblock {\em Pacific J. of Math.}, 12(4):1301--1319, 1962.

\bibitem{hong1999notes}
W.~C. Hong.
\newblock Notes on {F}r{\'e}chet spaces.
\newblock {\em International Journal of Mathematics and Mathematical Sciences},
  22(3):659--665, 1999.

\bibitem{hong2002note}
W.~C. Hong.
\newblock A note on weakly first countable spaces.
\newblock {\em Commuications-Korean Mathematical Society}, 17(3):531--534,
  2002.

\bibitem{KAN}
L.~V. Kantorovich, B.~Z. Vulich, and A.~G. Pinsker.
\newblock {\em Functional Analysis in Vector Order Spaces}.
\newblock GITTL, 1950.

\bibitem{KR.decompositionseries}
D.~C. Kent and G.~D. Richardson.
\newblock The decomposition series of a convergence space.
\newblock {\em Czechoslovak Math. J.}, 23(98):437--446, 1973.

\bibitem{quest}
E.~Michael.
\newblock A quintuple quotient quest.
\newblock {\em Gen. Topology Appl.}, {\bf 2}:91--138, 1972.

\bibitem{mynard.strong}
F.~Mynard.
\newblock Strongly sequential spaces.
\newblock {\em Comment. Math. Univ. Carolinae}, {\bf 41}:143--153, 2000.

\bibitem{mynard}
F.~Mynard.
\newblock Coreflectively modified continuous duality applied to classical
  product theorems.
\newblock {\em Applied General Topology}, 2 (2):119--154, 2002.

\bibitem{Myn.strongmore}
F.~Mynard.
\newblock More on strongly sequential spaces.
\newblock {\em Comment. Math. Univ. Carolinae}, 43(3):525--530, 2002.

\bibitem{Mynard.survey}
F.~Mynard.
\newblock Coreflectively modified duality.
\newblock {\em Rocky Mountain J. of Math.}, 34(2):733--758, 2004.

\bibitem{MR0367935}
S.~{\u{I}}. Nedev.
\newblock {$o$}-metrizable spaces.
\newblock {\em Trudy Moskov. Mat. Ob\v s\v c.}, 24:201--236, 1971.

\bibitem{MR0350706}
F.~Siwiec.
\newblock On defining a space by a weak base.
\newblock {\em Pacific J. Math.}, 52:233--245, 1974.

\bibitem{Tanakonsym}
Y.~Tanaka.
\newblock On symmetric spaces.
\newblock {\em Proc. Japan Acad}, 49:106--111, 1973.

\bibitem{Tanakaprod}
Y.~Tanaka.
\newblock Note on products of symmetric spaces.
\newblock {\em Proc. Japan Acad.}, 50:152--154, 1974.

\bibitem{why}
G.~T. Whyburn.
\newblock Accessibility spaces.
\newblock {\em Proc. Amer. Math. Soc.}, {\bf 24}:181--185, 1970.

\end{thebibliography}

\end{document}